\definecolor{webgreen}{rgb}{0,.5,0}
\definecolor{webbrown}{rgb}{.8,0,0}
\definecolor{emphcolor}{rgb}{0.5,0.95,0.95}
\newtheorem{theorem}{Theorem}[section]
\newtheorem{proposition}{Proposition}[section]
\newtheorem{lemma}{Lemma}[section]
\newtheorem{remark}{Remark}[section]
\newtheorem{definition}{Definition}[section]
\newtheorem{assumption}{Assumption}[section]
\newcommand{\p}{\mathbb{P}}
\newcommand{\e}{\mathbb{E}}
\newcommand{\reals}{\mathbb{R}}
\newcommand{\ind}{\mathds{1}}
\newcommand{\Ind}{\mathds{1}}
\newcommand{\diff}{\mathrm{d}}
\newcommand{\cH}{\mathcal{H}}
\newcommand{\homega}{\omega_q}
\newcommand{\ruintime}{\sigma^\pi_\omega}
\newcommand{\ruinbarrier}{\sigma^b_\omega}
\DeclareMathOperator*{\argmin}{arg\,min}
\begin{document}

\title[Optimal dividends with a level-dependent intensity of ruin]{Optimality of a barrier strategy in a spectrally negative L\'evy model with a level-dependent intensity of bankruptcy}

\author[D. Mata]{Dante Mata}

\author[J.-F. Renaud]{Jean-Fran\c{c}ois Renaud}
\address{D\'epartement de math\'ematiques, Universit\'e du Qu\'ebec \`a Montr\'eal (UQAM), 201 av.\ Pr\'esident-Kennedy, Montr\'eal (Qu\'ebec) H2X 3Y7, Canada}
\email{mata\_lopez.dante@uqam.ca, renaud.jf@uqam.ca}

\date{\today}

\keywords{Stochastic control, optimal dividends, spectrally negative L\'{e}vy processes, Omega models, Parisian ruin, barrier strategies.}

\begin{abstract}
We consider de Finetti's stochastic control problem for a spectrally negative L\'evy process in an Omega model. In such a model, the (controlled) process is allowed to spend time under the critical level but is then subject to a level-dependent intensity of bankruptcy. First, before considering the control problem, we derive some analytical properties of the corresponding Omega scale functions. Second, we prove that exists a barrier strategy that is optimal for this control problem under a mild assumption on the L\'evy measure. Finally, we analyse numerically the impact of the bankruptcy rate function on the optimal strategy.
\end{abstract}

\maketitle


\section{Introduction}

The stochastic control problem concerned with the maximization of dividend payments in a model based on a (general) spectrally negative Lévy process (SNLP) has attracted a lot of research interest since the papers of Avram, Palmowski \& Pistorius \cite{avram-palmowski-pistorius_2007} and Loeffen \cite{loeffen_2008}. In that problem, a dividend strategy is said to be optimal if it maximises the expected present value of dividend payments made up to the ruin time, which is a standard first-passage time. The recent developments in Parisian fluctuation theory for SNLPs have inspired the use of a Parisian ruin time as the termination time in that control problem; see, e.g., \cites{czarna-palmowski_2014, renaud2019, xu-et-al_2021, locas-renaud_2024b}. In particular, when using an exponential Parisian ruin time, i.e., with exponentially distributed delays, it has been possible to formulate a sufficient condition, namely on the Lévy measure, under which one can guarantee that a barrier strategy is optimal; see \cite{renaud2019}. This sufficient condition, for the optimality of a barrier strategy, dates back to \cite{loeffen-renaud_2010} in the classical version of the problem. It was proved that this condition on the Lévy measure guarantees the convexity of $W_q^\prime$, which is the derivative of the $q$-scale function of the corresponding SNLP; here, $q$ is the time-preference factor in the maximization problem. This convexity property is needed in particular to characterize the optimal barrier level. Then, in \cite{renaud2019}, it was proved that this convexity property was transferable to another scale function, namely $Z_q(\cdot, \Phi(p+q))$, which is at the core of the problem when Parisian ruin at rate $p$ is used as the termination time.

It is now well known that a Parisian first-passage time with exponential delays has connections with the distribution of certain occupation times of the underlying process; see, e.g., \cite{landriault-et-al_2011}. In that paper, there is also a connection with so-called \textit{Omega models}, which were introduced in \cites{albrecher-et-al_2011b, gerber-et-al_2012}. In such a model, and in the spirit of Parisian ruin aiming at making a distinction between default and bankruptcy, the probability of bankruptcy is a function of the surplus level. More precisely, given a \textit{bankruptcy rate function} $\omega(\cdot)$, if the surplus is at a level $x<0$, then the corresponding rate of bankruptcy is given by $\omega(x)$; a full definition is provided in Definition~\ref{def:bankruptcy-rate-function}. This concept of level-dependent bankruptcy was made rigorous for SNLPs by Li \& Palmowski \cite{li-palmowski_2018}. Indeed, they provided a general level-dependent notion of discounting for SNLPs, in the spirit of the connection between exponential Parisian ruin and occupation times. In fact, exponential Parisian ruin at rate $p$ corresponds to the \textit{bankruptcy rate function} $\omega(x)=p \ind_{(-\infty,0)}(x)$. In \cite{li-palmowski_2018}, fluctuation identities with level-dependent discounting are expressed in terms of some generalised scale functions, also called \textit{Omega scale functions}. These functions are defined implicitly as the solutions of Volterra integral equations; see Equation~\eqref{eq:functional-eq} below.

\subsection{Stochastic model}

In this paper, we study the problem of dividend maximization for a SNLP. Henceforth, consider a filtered probability space $\left( \Omega, \mathcal{F}, \left\lbrace \mathcal{F}_t, t \geq 0 \right\rbrace, \p \right)$ such that $\mathcal{F}_\infty := \bigvee_{t \geq 0} \mathcal{F}_t \subsetneq \mathcal{F}$, where $\bigvee_{t \geq 0} \mathcal{F}_t$ is the smallest $\sigma$-algebra containing $\mathcal{F}_t$ for all $t \geq 0$. Let $X=\left\lbrace X_t , t \geq 0 \right\rbrace$ be a SNLP (adapted to the filtration $\left\lbrace \mathcal{F}_t, t \geq 0 \right\rbrace$) with Laplace exponent
\[
\psi(\theta) = \gamma \theta + \frac{1}{2} \sigma^2 \theta^2 + \int^{\infty}_0 \left( \mathrm{e}^{-\theta z} - 1 + \theta z \ind_{(0,1]}(z) \right) \nu(\mathrm{d}z) , \quad \theta \geq 0 ,
\]
where $\gamma \in \reals$ and $\sigma \geq 0$, and where $\nu$ is a $\sigma$-finite measure on $(0,\infty)$, called the L\'{e}vy measure of $X$, satisfying
\begin{equation*}
\int^{\infty}_0 (1 \wedge x^2) \nu(\mathrm{d}x) < \infty .
\end{equation*}
The corresponding scale functions $\left\lbrace W_p , p \geq 0 \right\rbrace$ are then given by
\begin{equation*}
\int_0^\infty \mathrm{e}^{-\theta x} W_p (x) \mathrm{d}x = \left(\psi(\theta)-p \right)^{-1} , 
\end{equation*}
for all $\theta> \Phi(p)=\sup \left\lbrace \lambda \geq 0 \colon \psi(\lambda)=p \right\rbrace$. For more details on SNLPs and scale functions, see, e.g., \cites{kuznetsov-et-al_2012,kyprianou_2014}.

In what follows, we will use the following notation: the law of $X$ when starting from $X_0 = x$ is denoted by $\p_x$ and the corresponding expectation by $\e_x$. We write $\p$ and $\e$ when $x=0$. Finally, we exclude the case of $X$ having decreasing paths.

\subsection{Optimisation problem}\label{sect:optimisation-problem}

Let $X$ be the underlying wealth (cash, surplus) process. A control dividend strategy $\pi$ is represented by a non-decreasing, right-continuous and adapted stochastic process $L^\pi = \left\lbrace L^\pi_t , t \geq 0 \right\rbrace$ such that $L^\pi_{0-} = 0$, where $L^\pi_t$ represents the cumulative amount of dividends made/paid up to time $t$ under this control strategy. For a given strategy $\pi$, the corresponding controlled process $U^\pi = \left\lbrace U^\pi_t , t \geq 0 \right\rbrace$ is defined by $U^\pi_t = X_t - L^\pi_t$. A strategy is said to be admissible if, for all $t$, we have $0 \leq L_t^\pi - L_{t-}^\pi \leq U^\pi_{t-}$, and if $\int_{(0,\infty)} \ind_{\lbrace U^{\pi}_t < 0 \rbrace} \diff L^\pi_t = 0$. Let $\Pi$ be the set of admissible strategies.

In order to define the termination/bankruptcy time of interest, let us fix a \textit{bankruptcy rate function} $\omega \colon \reals \to [0,\infty)$, that is roughly speaking a non-negative and non-increasing function having support in $(-\infty,0]$; a full definition is provided in Definition~\ref{def:bankruptcy-rate-function}. Correspondingly, we define the termination time associated with $\omega(\cdot)$: for $\pi \in \Pi$, set
\begin{equation}\label{omega-ruin}
\ruintime = \inf \left\lbrace t>0 \colon \int_0^t \omega(U^\pi_s) \mathrm{d}s > \mathbf{e}_1 \right\rbrace ,
\end{equation}
where $\mathbf{e}_1$ is an independent random variable following the exponential distribution with unit mean.

Finally, fix a time-preference/discount rate $q > 0$. The performance function associated to an arbitrary $\pi \in \Pi$ is defined by
\[
v_\pi (x) = \e_x \left[ \int_0^{\ruintime} \mathrm{e}^{-q t} \mathrm{d}L^\pi_t \right] , \quad x \in \reals .
\]
The goal is to compute
\[
v_\ast (x) = \sup_{\pi \in \Pi} v_\pi (x) , \quad x \in \reals ,
\]
the (optimal) value function of this control problem by finding an optimal strategy $\pi_\ast \in \Pi$, i.e., an admissible strategy such that
\[
v_{\pi_\ast} (x) = v_\ast (x) , \quad x \in \reals .
\]
More precisely, in this paper, we find a sufficient condition on the Lévy measure for a barrier strategy (see Section~\ref{sect:barrier-strategies}) to be optimal, in the spirit of the existing literature on the maximization of dividends in a SNLP model that has followed since \cite{loeffen_2008}. See Theorem~\ref{thm:main-result} below.

\subsection{Contributions and organisation of the paper}

As alluded to above, our problem is a generalization of the one studied in \cite{renaud2019}, which was itself an \textit{asymptotic generalization} of the classical problem studied in \cites{loeffen_2008, loeffen-renaud_2010}. It turns out that no extra condition on the model is required to guarantee that a barrier strategy is optimal, even with this more general definition of bankruptcy; compare Theorem~\ref{thm:main-result} below with Theorem~1.1 in \cite{loeffen-renaud_2010} and Theorem~1 in \cite{renaud2019}.

To {solve this optimisation problem}, we study Omega scale functions beyond what is done in \cite{li-palmowski_2018}. More precisely, we study differentiability and convexity properties of those scale functions, in the spirit of what is known for classical scale functions (see Theorem~\ref{thm:differentiability}); these analytical properties are needed in the verification step of our solution to this control problem. It must be pointed out that these results about Omega scale functions are of independent interest and improve upon the theory developed in \cite{li-palmowski_2018}. The same methodology would apply to the other scale functions studied in that other paper, for example to $\mathcal{W}^\omega$.

The rest of this paper is structured as follows. In the next section, the definitions of a discounting intensity and then of a bankruptcy rate function are given, and a detailed analysis of Omega scale functions is provided. In Section~\ref{Sec:Optim:Dividend}, we come back to our optimisation problem: first, we compute the performance function of an arbitrary barrier strategy, and then we prove that one of these strategies is optimal for our control problem. Section~\ref{Sec:Verification} contains the proof of Theorem~\ref{thm:main-result}, which provides a solution to the optimisation problem. The last section contains numerical experiments illustrating the sensitivity of the optimal strategy, and the value function, with respect to the bankruptcy rate function.

\section{Scale functions}\label{sect:scale}

In what follows, it is assumed that, for an integrable function $f$, if $a \geq b$ then $\int_a^b f(x) \diff x = 0$. In general, by $\int_a^b$ we mean $\int_{[a,b)}$.

For the rest of this paper, we make the following standing assumption:
\begin{assumption}\label{standing-assumption}
If the paths of $X$ are of bounded variation (BV), then the Lévy measure is assumed to be absolutely continuous with respect to Lebesgue measure. 
\end{assumption}
As a consequence of this assumption, we have that, for any $p \geq 0$, the scale function $W_p$ is continuously differentiable on $(0,\infty)$; see, e.g., \cite{kuznetsov-et-al_2012}. {For future use, let us recall that
\begin{equation*}\label{initialvalues}
\begin{split}
W_p(0+)= &
\begin{cases}
1/(\gamma+\int_0^1 z \nu(\mathrm{d}z)) & \text{when $\sigma=0$ and $\int_0^1 z \nu(\mathrm{d}z)<\infty$},  \\
0 & \text{otherwise},
\end{cases}\\ 
W_p^{\prime}(0+)=&
\begin{cases}
2/\sigma^2 & \text{when $\sigma>0$,} \\
(\nu(0,\infty)+p)/(\gamma+\int_0^1 z \nu(\mathrm{d}z))^2 & \text{when $\sigma=0$ and $\nu(0,\infty)<\infty$,} \\
\infty & \text{otherwise.}
\end{cases}
\end{split}
\end{equation*}
For more details on scale functions, see, e.g., \cites{kuznetsov-et-al_2012}.
}

Recall that $q$ is the discount rate. Since \cites{avram-et-al_2007,loeffen_2008}, it has been widely known that the $q$-scale function $W_q$ is an essential object in the study of the optimal dividends problem with classical ruin. When exponential Parisian ruin is considered, as in \cite{renaud2019}, a second family of scale functions is needed. If Parisian ruin occurs at rate $\phi>0$, i.e., if $\omega(x) = \phi \ind_{(-\infty,0)}(x)$, then $Z_q (\cdot;\Phi(\phi+q))$ is the scale function of interest. In general, for $r,s>0$, such a function is defined, for each $x \in \reals$, by
\begin{multline}\label{eq:Zq-def}
Z_r (x;\Phi(s)) = \mathrm{e}^{\Phi(s)x} \left(1-(s-r) \int_0^x \mathrm{e}^{-\Phi(s)y} W_{r}(y) \mathrm{d}y \right) \\
= (s-r) \int_0^\infty \mathrm{e}^{-\Phi(s)y} W_{r}(x+y) \mathrm{d}y ,
\end{multline}
where, for $x \leq 0$, we have $Z_r (x;\Phi(s))=\mathrm{e}^{\Phi(s) x}$; note that the last expression in~\eqref{eq:Zq-def} is valid when $r<s$. In particular, we have $Z_q (x;\Phi(\phi+q))= \phi \int_0^\infty \mathrm{e}^{-\Phi(\phi+q)y} W_q(x+y) \mathrm{d}y$.

It is known (see, e.g., \cite{renaud2019}) that $x \mapsto Z_r(x;\Phi(s))$ is continuous on $\reals$ and that:
\begin{enumerate}
\item If $X$ has paths of BV, then $x \mapsto Z_r(x;\Phi(s))$ is continuously differentiable on $\reals \setminus \{0\}$ with
\begin{equation}\label{eq:derivative-at-zero-Z}
Z_r^\prime(0+;\Phi(s)) = \Phi(s) - (s-r) W_r(0+) \quad \text{and} \quad Z_r^\prime(0-;\Phi(s)) = \Phi(s) .
\end{equation}
\item If $X$ has paths of UBV, then $x \mapsto Z_r(x;\Phi(s))$ is continuously differentiable on $\reals$.
\end{enumerate}
If $\sigma>0$, then $x \mapsto Z_r(x;\Phi(s))$ is twice continuously differentiable on $\reals \setminus \{0\}$.

Before defining the scale function of interest in our context, let us give two identities involving the scale functions defined so far.

\begin{lemma}\label{lemma:identities}
For real numbers $a \leq b < x$, we have
	\begin{equation}\label{eq:identity-W}
		\int_a^x W_{p}(x-y) W_{r}(y-a) \mathrm{d}y = \frac{W_{r}(x-a) - W_{p}(x-a)}{r-p}
	\end{equation}
	and, for $s > \max\{p,r\}$, we have
	\begin{equation}\label{eq:identity-Z}
		\int_a^x W_{p}(x-y) Z_r (y-a;\Phi(s)) \mathrm{d}y = \frac{Z_r (x-a;\Phi(s)) - Z_p (x-a;\Phi(s))}{r-p} .
	\end{equation}
\end{lemma}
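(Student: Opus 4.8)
The plan is to prove both identities by reducing to a convolution and then invoking the defining Laplace transform of the scale functions together with injectivity of the Laplace transform. First I would remove the parameter $a$ by the substitution $u = y-a$: writing $\tilde{x} = x-a > 0$, the left-hand side of \eqref{eq:identity-W} becomes $\int_0^{\tilde{x}} W_{p}(\tilde{x}-u) W_{r}(u)\,\mathrm{d}u$, which is exactly the convolution $(W_p * W_r)(\tilde{x})$ on $[0,\infty)$, and the right-hand side becomes $(W_r(\tilde{x})-W_p(\tilde{x}))/(r-p)$. The same substitution reduces \eqref{eq:identity-Z} to the convolution $(W_p * Z_r(\,\cdot\,;\Phi(s)))(\tilde{x})$, where $Z_r(\,\cdot\,;\Phi(s))$ is regarded as a function on $[0,\infty)$. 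Since all functions involved are continuous and of at most exponential growth, it suffices to check equality of the Laplace transforms of both sides on a suitable half-line $\theta > \theta_0$, and then conclude by uniqueness.

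For \eqref{eq:identity-W}, using $\int_0^\infty \mathrm{e}^{-\theta x} W_p(x)\,\mathrm{d}x = (\psi(\theta)-p)^{-1}$, valid for $\theta > \Phi(p)$, the convolution theorem gives that the transform of the left-hand side equals $\left((\psi(\theta)-p)(\psi(\theta)-r)\right)^{-1}$ for $\theta > \Phi(\max\{p,r\})$. On the other hand, the transform of the right-hand side is $\frac{1}{r-p}\bigl((\psi(\theta)-r)^{-1} - (\psi(\theta)-p)^{-1}\bigr)$, which simplifies by a partial-fraction computation to the same expression. Injectivity of the Laplace transform on the common domain $\theta > \Phi(\max\{p,r\})$ then yields \eqref{eq:identity-W}.

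For \eqref{eq:identity-Z} the only genuinely new ingredient is the Laplace transform of $Z_r(\,\cdot\,;\Phi(s))$. I would compute it from the second representation in \eqref{eq:Zq-def}, namely $Z_r(x;\Phi(s)) = (s-r)\int_0^\infty \mathrm{e}^{-\Phi(s)y} W_r(x+y)\,\mathrm{d}y$: after the change of variables $w = x+y$ and an application of Fubini's theorem (justified by $s>r$, so that $\Phi(s) > \Phi(r)$ and the double integral converges absolutely for $\theta>\Phi(s)$), and using the defining relation $\psi(\Phi(s)) = s$, one obtains the closed form
\[
\int_0^\infty \mathrm{e}^{-\theta x} Z_r(x;\Phi(s))\,\mathrm{d}x = \frac{\psi(\theta)-s}{(\theta-\Phi(s))(\psi(\theta)-r)} , \quad \theta > \Phi(s) .
\]
The convolution theorem then expresses the transform of the left-hand side of \eqref{eq:identity-Z} as $(\psi(\theta)-p)^{-1}$ times this quantity, while the transform of the right-hand side is $\frac{1}{r-p}$ times the difference of the transforms of $Z_r(\,\cdot\,;\Phi(s))$ and $Z_p(\,\cdot\,;\Phi(s))$; the same partial-fraction identity as before collapses both to $\frac{\psi(\theta)-s}{(\theta-\Phi(s))(\psi(\theta)-r)(\psi(\theta)-p)}$. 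Here the hypothesis $s > \max\{p,r\}$ is exactly what guarantees $\Phi(s) > \max\{\Phi(p),\Phi(r)\}$, so that all three transforms converge on the common half-line $\theta > \Phi(s)$ and uniqueness applies.

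The main obstacle is the computation and simplification of the Laplace transform of $Z_r(\,\cdot\,;\Phi(s))$: one must carry out the Fubini interchange carefully and track the cancellation that uses $\psi(\Phi(s)) = s$, as well as keep the bookkeeping on domains of convergence consistent so that the final uniqueness argument is legitimate. Once that transform is in hand, both identities follow from the identical partial-fraction mechanism, so the two parts of the lemma are really one argument applied twice.
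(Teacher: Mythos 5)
Your proof is correct: the Laplace transform of $Z_r(\cdot;\Phi(s))$ that you compute, $\frac{\psi(\theta)-s}{(\theta-\Phi(s))(\psi(\theta)-r)}$ for $\theta>\Phi(s)$, is right, and the partial-fraction cancellation does collapse both sides of each identity to the same transform on the common half-line, so uniqueness plus continuity finishes the argument. However, this is a genuinely different route from the one taken in the paper. The paper does not reprove \eqref{eq:identity-W} at all --- it treats it as known, citing a power-series proof of Pistorius and a Laplace-transform proof of Loeffen et al. --- and its appendix proves only \eqref{eq:identity-Z}, deliberately avoiding transforms: it substitutes the representation $Z_r(x;\Phi(s)) = (s-r)\int_0^\infty \mathrm{e}^{-\Phi(s)z} W_r(x+z)\,\mathrm{d}z$ into the left-hand side, interchanges the order of integration, applies \eqref{eq:identity-W} pointwise to the inner convolution $\int_0^x W_p(x-y)W_r(y+z)\,\mathrm{d}y$ (after splitting off the range $y\in[0,z]$), and recognizes the resulting integrals as $Z_p$ and $Z_r$ again via the same representation. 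Your approach is essentially the one from Albrecher et al.\ that the paper explicitly positions itself against as ``arguably simpler and shorter'': it has the virtue of uniformity (one mechanism handles both identities) but requires the bookkeeping you acknowledge --- growth bounds, absolute convergence for Fubini, domains of the transforms, and injectivity of the Laplace transform followed by a continuity argument to upgrade a.e.\ equality to everywhere. The paper's derivation trades that machinery for a direct pointwise computation, at the cost of taking \eqref{eq:identity-W} as an external input rather than deriving both identities from one principle.
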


To the best of our knowledge, a version of the identity in~\eqref{eq:identity-W} first appeared in Lemma~3 of \cite{pistorius_2004} in which its proof is based on a well-known power series expansion of the $p$-scale function $W_p$. Another proof, based on Laplace transforms, is provided in \cite{loeffen-et-al_2014}. The identity in~\eqref{eq:identity-Z} first appeared in Lemma~4.1 of \cite{albrecher-et-al_2016}, in a slightly modified version; its proof is based on Laplace transforms. In Appendix~\ref{A:identities}, we provide an arguably simpler and shorter proof.

\subsection{Discounting intensity}

In the next section, we will define and study a generalised scale function associated to a given \textit{discounting intensity}. Now, let us provide a full definition of the latter.

\begin{definition}\label{def:bankruptcy-rate-function}
We say that $\omega \colon \reals \to [0,\infty)$ is a discounting intensity if:
\begin{enumerate}
\item it is non-increasing;
\item it is \textit{ultimately constant}, i.e., there exist $a \in (-\infty,0]$ and $\phi > 0$ for which $\omega(x)=\phi$ for all $x \in (-\infty,a)$, and there exists $\rho \in [0, \omega(0-)]$ such that $\omega(x)=\rho$ for all $x \in [0,\infty)$;
\item it is piecewise absolutely continuous, i.e., there exists a finite partition $\{a_1, \dots, a_{n+1}\}$ of $[a,0]$ such that $a = a_1 < \cdots < a_n \leq a_{n+1} = 0$ and for which $\omega$ is absolutely continuous on each sub-interval $(a_k,a_{k+1})$, for $k \in \lbrace 1,\cdots,n \rbrace$.
\end{enumerate}
In particular, in an Omega model, we say that $\omega$ is a bankruptcy rate function if it is a discounting intensity with $\rho=0$.
\end{definition}

Note that the assumption of being ultimately constant comes from Section~2.4 in \cite{li-palmowski_2018}. This assumption has been relaxed in \cite{vidmar2019}. However, in what follows, our methodology relies heavily on this assumption.


As opposed to other elements in the partition, a discounting intensity might or might not have jumps at $a$ and $0$. More precisely, we have $\phi=\omega(a-) \geq \omega(a+)$ and $\omega(0-) \geq \rho=\omega(0)$. Thus, if $\omega$ is continuous on $(-\infty,0)$, then $n=1$ and $a \leq 0$ and thus the only possible discontinuity point is now at $0$. This is the case for example if $\omega(x) = \phi \ind_{(-\infty,0)} (x) + \rho \ind_{[0,\infty)}(x)$. However, it is possible for $\omega$ to be continuous on $\reals$. For example,
\[
\omega(x) = \phi \ind_{(-\infty,-\phi)}(x) - x \ind_{[-\phi,0)}(x)
\]
is a continuous discounting intensity/bankruptcy rate function.

An important family of discounting intensities/bankruptcy rate functions is given by piecewise constant functions. For example, for a partition given by $-\infty=a_0 < a_1 < \dots < a_n < a_{n+1}=0$ and decreasing rates given by $\phi = p_0 > p_1 > \dots > p_n > 0$, we can define
\begin{equation}\label{omega-step-function}
\omega(x) = \sum_{i=1}^{n+1} p_{i-1} \ind_{[a_{i-1},a_i)}(x) = p_0 \ind_{(-\infty,a_1)}(x) + p_1 \ind_{[a_1,a_2)}(x) + \dots + p_n \ind_{[a_n, a_{n+1})}(x) ,
\end{equation}
where, for simplicity, we used the notation $[a_0,a_1) := (-\infty,a_1)$. Intuitively speaking, the function given in~\eqref{omega-step-function} is such that the rate $p_i$ is applied when the process is in the interval $[a_i,a_{i+1})$. As the process goes deeper into the \textit{red zone}, since we have $p_i < p_{i-1}$, the discounting rates increase.

\begin{remark}\label{rem:discontinuous}
In what follows, for the sake of readability, many statements will be made using the set $\{a_1,\dots,a_{n+1}\}$ even though, when $\omega$ is continuous at $a_{n+1}=0$, the statement remains true with $\{a_1,\dots,a_{n}\}$ instead.
\end{remark}

\begin{remark}
In an Omega model, a piecewise constant bankruptcy rate function yields a termination time that is already significantly more general than a Parisian ruin time with exponential delays. Indeed, Parisian ruin with exponential rate $\phi>0$ corresponds to the bankruptcy rate function $\omega(x) = \phi \ind_{(-\infty,0)}(x)$.
\end{remark}

\subsection{Omega scale functions}\label{Sec:Solution:Complete}

For the control problem under study, we need a generalization of both $W_{q}(\cdot)$ and $Z_q (\cdot;\Phi(\phi+q))$. A theory of scale functions based on a discounting intensity $\omega$ has been developed in \cite{li-palmowski_2018}. In that paper, the $\omega$-scale function $\cH^{\omega} \colon \reals \to \reals$ is defined as the solution of the following functional equation:
\begin{equation}\label{eq:functional-eq}
\cH^{\omega}(x) = \mathrm e^{\Phi(\phi)(x-a)} + \int_{a}^x W_{\phi}(x-y) (\omega(y) - \phi) \cH^{\omega}(y) \diff y , \quad x \in \reals .
\end{equation}
Also, as it is known, from Lemma~2.1 in \cite{li-palmowski_2018}, that $\cH^\omega$ is a nonnegative and locally bounded function, {then from~\eqref{eq:functional-eq} we further have that it is continuous}. In Theorem~\ref{thm:differentiability}, we will discuss its smoothness.
%

\begin{remark}
Note that~\eqref{eq:functional-eq} does not appear verbatim in \cite{li-palmowski_2018}. We have performed a translation in Equations~(2.22)-(2.23) of \cite{li-palmowski_2018} to obtain~\eqref{eq:functional-eq}. This modification is needed to be in agreement with our definition of a discounting intensity.
\end{remark}

First, let us provide a continuum of alternate functional equations for the definition of the $\omega$-scale function $\cH^{\omega}$.

\begin{lemma}\label{lem:alternate-functional-equations}
For a fixed $p \geq 0$, we have
	\begin{equation}\label{omega:Volterra}
		\cH^{\omega}(x) = Z_p(x-a;\Phi(\phi)) + \int_{a}^x W_p(x-y) (\omega(y)-p) \cH^{\omega}(y) \diff y , \quad x \in \reals .
	\end{equation}
\end{lemma}

\begin{proof}
Using the definition of $\cH^{\omega}$ as given by~\eqref{eq:functional-eq}, we can write
\begin{multline*}
\int_a^x W_p(x-y) \cH^{\omega} (y) \diff y \\
= \int_a^x \mathrm{e}^{\Phi(\phi)(y-a)} W_p(x-y) \diff y  + \int_a^x W_p(x-y) \left[ \int_a^y W_\phi(y-z) (\omega(z)-\phi) \cH^{\omega} (z) \diff z \right] \diff y \\
= \int_0^{x-a} \mathrm{e}^{\Phi(\phi)(x-y-a)} W_p(y) \diff y  + \int_a^x (\omega(z)-\phi) \cH^{\omega} (z) \left[ \int_z^x W_p(x-y) W_\phi(y-z) \diff y \right] \diff z ,
\end{multline*}
in which, by Lemma~\eqref{lemma:identities}, we have
\[
\int_z^x W_p(x-y) W_\phi(y-z) \diff y = \frac{W_p(x-z) - W_\phi(x-z)}{p-\phi} .
\]
Using~\eqref{eq:functional-eq} to replace $\int_{a}^x W_{\phi}(x-y) (\omega(y) - \phi) \cH^{\omega}(y) \diff y$ by $\cH^{\omega}(x) - \mathrm e^{\Phi(\phi)(x-a)}$ and identifying $Z_p(x-a;\Phi(\phi))$, we can further write
\begin{multline*}
(p- \phi) \int_a^x W_p(x-y) \cH^{\omega} (y) \diff y \\
= Z_p(x-a;\Phi(\phi)) - \cH^{\omega} (x) + \int_{a}^x W_p(x-y) (\omega(y)-\phi) \cH^{\omega}(y) \diff y .
\end{multline*}
The result follows.
\end{proof}

Note that, in the last lemma, if $p=\phi$ then $Z_p(y;\Phi(\phi)) = \mathrm e^{\Phi(\phi)y}$ and we recover~\eqref{eq:functional-eq}.

\begin{remark}
Of course, $\cH^{\omega}$ is independent of the arbitrarily chosen value of $p$. Later, when solving the control problem, we will make an appropriate choice for $p$.
\end{remark}

We are ready to state our first main result. Recall that $a_1=a$ and $a_{n+1}=0$.

\begin{theorem}\label{thm:differentiability}
Under Assumption~\ref{standing-assumption}, the $\omega$-scale function $\cH^{\omega}$ has the following differentiability properties:
{
\begin{enumerate}
\item If $\sigma^2>0$, then $\cH^{\omega}$ is continuously differentiable on $\reals$.
\item If $\sigma^2=0$ and $X$ has paths of UBV, with the additional assumption that the first derivative of the scale function is convex on $(0,\infty)$, then $\cH^{\omega}$ is continuously differentiable on $\reals$.
\item If $\sigma^2=0$ and $X$ has paths of BV with $\nu(0,\infty)=\infty$, with the additional assumption that the first derivative of the scale function is convex on $(0,\infty)$, then $\cH^{\omega}$ is continuously differentiable on $\reals \setminus \{a_1, \dots, a_{n+1}\}$.
\item If $\sigma^2=0$ and $\nu(0,\infty)<\infty$, then $\cH^{\omega}$ is continuously differentiable on $\reals \setminus \{a_1, \dots, a_{n+1}\}$
\end{enumerate}
Furthermore, if $\sigma >0$ and if the first derivative of the scale function is convex on $(0,\infty)$, then $\cH^{\omega}$ is twice continuously differentiable on $\reals \setminus \{a_1, \dots, a_{n+1}\}$.
}
\end{theorem}

Note that the differentiability properties of $\cH^{\omega}$ are reminiscent of those for classical scale functions; see, e.g., \cite{kuznetsov-et-al_2012}. {We also remark that if $\omega$ is continuous at $a_{n+1}=0$, then the statement remains valid with $\lbrace a_1,\cdots,a_n \rbrace$ instead (see Remark~\ref{rem:discontinuous}).}




\subsection{Proof of Theorem~\ref{thm:differentiability}}

The proof of Theorem~\ref{thm:differentiability} is based on the following lemma.

\begin{lemma}\label{lemma:diff-volterra}
Let $a \in \reals$ and let $H \colon \reals \to \reals$ be piecewise continuous on $(a,\infty)$, with discontinuities on $\{a_1, \dots, a_{n+1}\}$. Also, let {$g \colon (0,\infty) \to \reals$} be such that:
\begin{enumerate}
    \item $|g(0+)|<\infty$;
    \item {$g$ is continuously differentiable};
    \item $g^\prime$ satisfies one of the following conditions:
    \begin{enumerate}
        \item $|g^\prime(0+)|<\infty$;
        \item it is non-negative and convex;
        \item it is non-decreasing;
    \end{enumerate}
    \item $\int_0^A |g^\prime(y)| \diff y < \infty$, for any $A>0$.
\end{enumerate}
Under these assumptions, the function $f(x) = \int_a^x g(x-y) H(y) \diff y$ is {continuously} differentiable on $(a,\infty) \setminus \{a_1, \dots, a_{n+1}\}$ with
\[
f^\prime(x) = g(0+) H(x) + \int_a^x g^\prime(x-y) H(y) \diff y .
\]
{Furthermore, if $g(0+)=0$ then $f$ is continuously differentiable on $(a,\infty)$.}
\end{lemma}

\begin{proof}
{First, note that for each $b \in (a,\infty) \setminus \{a_1, \dots, a_{n+1}\}$, there exists a constant $C_b > 0$ such that $|H(y)|\leq C_b$ for all $y \in [a,b]$.}

Let us fix $x \in (a,\infty) \setminus \{a_1, \dots, a_{n+1}\}$. For $h>0$, we can write
\begin{equation}\label{eq:differential-quotient}
\frac{f(x+h)-f(x)}{h} = \int_{a}^x \frac{g(x+h-y)-g(x-y)}{h} H(y) \diff y + \frac{1}{h} \int_x^{x+h} g(x+h-y) H(y) \diff y .
\end{equation}
Before computing the limit when $h$ decreases to zero, let us further assume that $h \in (0,1)$. Since $g^\prime$ is continuous, we can write, for $y \in (a,x)$,
\begin{multline*}
\left| \frac{g(x+h-y)-g(x-y)}{h} H(y) \right| \leq C_x \left| \frac{g(x+h-y)-g(x-y)}{h} \right| \\
\leq \frac{C_x}{h} \int_{x-y}^{x+h-y} |g^\prime (u)| \diff u \leq C_x \max_{u \in [x-y,x-y+1]} |g^\prime (u)| .
\end{multline*}
Note that, under Assumption~(3a), by continuity we have, for all $y \in (a,x)$,
\[
\max_{u \in [x-y,x-y+1]} |g^\prime (u)| \leq \max_{u \in [0,x-a+1]} |g^\prime (u)| < \infty ,
\]
in which {$g^\prime(0):=g^\prime(0+)$}. Otherwise, under Assumption~(3b)-(3c), we can further write
\[
\max_{u \in [x-y,x-y+1]} |g^\prime (u)| \leq \max \left\lbrace |g^\prime (x-y)|, |g^\prime (x-y+1)| \right\rbrace \leq |g^\prime (x-y)| + |g^\prime (x-y+1)| .
\]
Note that, under Assumption~(4), $y \mapsto |g^\prime (x-y)| + |g^\prime (x-y+1)|$ is integrable on $(a,x)$.

Now, let us compute the limits in~\eqref{eq:differential-quotient}. For the first integral, using the Dominated Convergence Theorem, we have
\[
\lim_{h \downarrow 0} \int_{a}^x \frac{g(x+h-y)-g(x-y)}{h} H(y) \diff y = \int_{a}^x g^\prime(x-y) H(y) \diff y .
\]
For the second integral, since $x \notin \{a_1, \dots, a_{n+1}\}$, by continuity, for a given $\epsilon>0$, there exists $\delta>0$ such that: for any $u,v \in (0,\delta)$,
\[
\left| g(u) H(x+v) - g(0+) H(x) \right| < \epsilon . 
\]
Hence, if $h \in (0,\delta)$, then we also have that $x+h-y$ lies in $(0,\delta)$ for each $y \in (x,x+h)$ and we can write
\begin{multline*}
\left| \frac{1}{h} \int_x^{x+h} g(x+h-y) H(y) \diff y - g(0+) H(x) \right| \\
{\leq} \frac{1}{h} \int_x^{x+h} \left| g(x+h-y) H(y) - g(0+) H(x) \right| \diff y < \epsilon .
\end{multline*}
This proves that
\[
\lim_{h \downarrow 0} \frac{1}{h} \int_x^{x+h} g(x+h-y) H(y) \diff y = g(0+) H(x) .
\]

Similarly, for $h>0$, we can write
\[
\frac{f(x-h)-f(x)}{-h} = \int_{a}^{x-h} \frac{g(x-y)-g(x-h-y)}{h} H(y) \diff y \\
+ \frac{1}{h} \int_{x-h}^x g(x-y) H(y) \diff y .
\]
As above, we can show that
\[
\lim_{h \downarrow 0} \frac{f(x-h)-f(x)}{-h} = g(0+) H(x) + \int_{a}^{x} g^\prime (x-y) H(y) \diff y .
\]
The result follows.
%
%
\end{proof}

Recall that $\cH^{\omega}$ is continuous on $\reals$ and, as an increasing function, $\cH^{\omega}$ is differentiable almost everywhere on $\reals$; in fact, it admits right-hand and left-hand derivatives everywhere on $\reals$.

Recall from Equation~\eqref{omega:Volterra} in Lemma~\ref{lem:alternate-functional-equations} that, for any $p \geq 0$,
\[
\cH^{\omega}(x) = Z_p(x-a;\Phi(\phi)) + \int_{a}^x W_p(x-y) (\omega(y)-p) \cH^{\omega}(y) \diff y .
\]
The differentiability properties of $x \mapsto Z_p(x-a;\Phi(\phi))$ have already been discussed in Section~\ref{sect:scale}. Consequently, to prove Theorem~\ref{thm:differentiability}, we must now analyse the differentiablity of $x \mapsto \int_{a}^x W_{p}(x-y) (\omega(y) - p) \cH^{\omega}(y) \diff y$, for $x \in (a,\infty) \setminus \{a_1, \dots, a_{n+1}\}$. In view of Lemma~\ref{lemma:diff-volterra}, for what follows, we set $H(x) = (\omega(x) - p) \cH^{\omega}(x)$; {note that $H$ is indeed piecewise continuous on $(a,\infty)$.}


{
First, we consider $g(x)=W_p(x)$. Indeed, under Assumption~\ref{standing-assumption}, we have that $W_p$ is continuously differentiable on $(0,\infty)$ and its derivative is non-negative. It is well known that $W_p^\prime$ is integrable on $(0,A)$, for any $A>0$. Further, if $\sigma^2>0$ or, if $\sigma^2=0$ and $\nu(0,\infty)<\infty$, then $W_p^\prime(0+) \in (0,\infty)$. Otherwise, if $\sigma^2=0$ and $X$ has paths of UBV, or if $\sigma^2=0$ and $X$ has paths of BV with $\nu(0,\infty)=\infty$, then $W_p^\prime(0+)=\infty$. However, in these last two cases, since we further have that $W_p^\prime$ is convex on $(0,\infty)$, then Lemma~\ref{lemma:diff-volterra} can be applied. In particular, we have obtained: for $x \in \reals \setminus \{a_1, \dots, a_{n+1}\}$,
\begin{equation}\label{H:first:derivative}
\cH^{\omega \prime}(x) = Z_p^\prime(x-a;\Phi(\phi)) + W_p(0+) (\omega(x)-p) \cH^{\omega}(x) + \int_{a}^x  W^\prime_p(x-y) (\omega(y)-p) \cH^{\omega}(y) \diff y .
\end{equation}
Note that, if $X$ has paths of UBV, i.e., if $W_p(0+)=0$, then~\eqref{H:first:derivative} becomes
\begin{equation}\label{H:first:derivative-UBV}
\cH^{\omega \prime}(x) = Z_p^\prime(x-a;\Phi(\phi)) + \int_{a}^x  W^\prime_p(x-y) (\omega(y)-p) \cH^{\omega}(y) \diff y .
\end{equation}
}

{
Second, if $\sigma^2>0$, then we consider $g(x)=W_p^\prime(x)$. Recall that with $\sigma^2>0$, we have $W_p(0+)=0$, $W_p^\prime(0+)=2/\sigma^2$ and $W_p$ is twice continuously differentiable on $(0,\infty)$. Consequently, $W_p^\prime$ is continuously differentiable and, since it is also assumed to be convex on $(0,\infty)$, its derivative $W_p^{\prime \prime}$ is non-decreasing. Finally, since $W_p^\prime(0+)<\infty$ and $W_p^\prime$ is continuously differentiable, then $W_p^{\prime \prime}$ is integrable on $(0,A)$, for any $A>0$. In other words, Lemma~\ref{lemma:diff-volterra} can be applied with $g(x)=W_p^\prime(x)$ to the integral in~\eqref{H:first:derivative-UBV}. In particular, we have obtained: for $x \in \reals \setminus \{a_1, \dots, a_{n+1}\}$,
\begin{equation}\label{H:second:derivative}
\cH^{\omega \prime \prime}(x) = Z_p^{\prime \prime}(x-a;\Phi(\phi)) + \frac{2}{\sigma^2} (\omega(x)-p) \cH^{\omega}(x) + \int_{a}^x  W^{\prime \prime}_p(x-y) (\omega(y)-p) \cH^{\omega}(y) \diff y .
\end{equation}
}

\subsection{Convexity of the derivative}

The next proposition provides a crucial property for the analysis of our control problem.

\begin{proposition}\label{prop:log-convex-general}
If the tail of the Lévy measure is log-convex, i.e., if the function $x \mapsto \nu(x,\infty)$ is log-convex on $(0,\infty)$, then $\mathcal{H}^{\omega \prime}$ is log-convex on $(0,\infty)$. In particular, it is also convex on $(0,\infty)$.
\end{proposition}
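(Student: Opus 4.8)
The plan is to reduce the whole statement to the known log-convexity of $W_p'$ together with the fact that log-convexity is preserved under positive superpositions. Concretely, I would invoke the result established under this exact hypothesis (see \cite{loeffen-renaud_2010} and the references therein), namely that log-convexity of $x\mapsto\nu(-\infty,-x)$ on $(0,\infty)$ forces $W_p'$ to be log-convex (in particular strictly positive) on $(0,\infty)$, for every $p\geq 0$. Granting this, the entire task becomes to write $\mathcal H^{\omega\prime}$ on $(0,\infty)$ as a positive superposition of translates of a \emph{single} log-convex building block, namely $W_\rho'$.

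The decisive step is the choice $p=\rho$ in the differentiated representation~\eqref{H:first:derivative}. Since $\omega\equiv\rho$ on $[0,\infty)$, with this choice the diagonal term satisfies $K(x,x)=W_\rho(0)(\omega(x)-\rho)=0$ for every $x>0$, and the kernel $\frac{\partial}{\partial x}K(x,y)=W_\rho'(x-y)(\omega(y)-\rho)$ vanishes for $y\geq 0$. Hence, for $x>0$, equation~\eqref{H:first:derivative} collapses to
\[
\mathcal H^{\omega\prime}(x)=Z_\rho'(x-a;\Phi(\phi))+\int_a^0 W_\rho'(x-y)\,(\omega(y)-\rho)\,\mathcal H^{\omega}(y)\,\diff y .
\]
The point of this reduction is that $\mathcal H^\omega(y)$ now enters only as a fixed, non-negative weight: indeed $\mathcal H^\omega\geq 0$ by Lemma~\ref{lemma:existence_H}, and $\omega(y)\geq\omega(0-)>\rho$ for $y<0$, so $(\omega(y)-\rho)\mathcal H^\omega(y)\geq 0$ on $[a,0)$. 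Crucially, the weight carries no $x$-dependence; all the $x$-dependence lives in the translate $W_\rho'(x-y)$.

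For the final step I would use two elementary closure properties of log-convexity: (a) if $f$ is log-convex on $(0,\infty)$ and $c\leq 0$, then $x\mapsto f(x-c)$ is log-convex on $(0,\infty)$, since $x-c$ maps $(0,\infty)$ affinely into $(0,\infty)$; and (b) the class of log-convex functions on a fixed interval is closed under integration against a non-negative measure, a one-line consequence of Hölder's inequality. Differentiating the second representation in~\eqref{eq:Zq-def} gives $Z_\rho'(x-a;\Phi(\phi))=(\phi-\rho)\int_0^\infty e^{-\Phi(\phi)u}W_\rho'(x-a+u)\,\diff u$ (with $\phi>\rho$), so both terms in the displayed formula are non-negative superpositions of the translates $\{x\mapsto W_\rho'(x-c):c\leq 0\}$, each log-convex on $(0,\infty)$ by (a). They therefore combine into a single positive superposition of log-convex functions, and (b) yields that $\mathcal H^{\omega\prime}$ is log-convex on $(0,\infty)$; since every log-convex function is convex, the final assertion follows.

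The main obstacle is precisely what this argument must avoid. Differentiating the series representation~\eqref{Sc:Fn:Omega:Cont} term by term would instead present $\mathcal H^{\omega\prime}$ as a \emph{convolution} of $W_\rho'$ against the solution-dependent positive measure built from $R_\rho$, and log-convexity is \emph{not} preserved under convolution (for instance $e^{ax}\ast e^{bx}\propto e^{bx}-e^{ax}$ is log-concave). The whole force of the proof is thus in selecting $p=\rho$, which freezes $\mathcal H^\omega$ on $[a,0)$ and converts that convolution into a genuine superposition of translates of one log-convex function; after that, only the standard closure properties and the cited log-convexity of $W_p'$ are required.
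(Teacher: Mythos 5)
Your proposal is correct and follows essentially the same route as the paper: the choice $p=\rho$ reducing \eqref{H:first:derivative} to the representation \eqref{Sc:Fn:Omega:Prime:Positive}, the log-convexity of $W_\rho'$ under the hypothesis on $\nu$ via \cite{loeffen-renaud_2010}, and closure of log-convexity under non-negative superposition are exactly the ingredients used there. The only cosmetic difference is that you re-derive the log-convexity of $x\mapsto Z_\rho'(x-a;\Phi(\phi))$ by writing it as a positive superposition of translates of $W_\rho'$, whereas the paper cites (the proof of) Proposition~2 in \cite{renaud2019} for that fact.
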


\begin{proof}
Recall from~\eqref{H:first:derivative} that, for $p = \rho$, we have
{
\[
\cH^{\omega \prime}(x) = Z_\rho^\prime(x-a;\Phi(\phi)) + W_\rho(0+) (\omega(x)-\rho) \cH^{\omega}(x) + \int_{a}^x  W^\prime_\rho(x-y) (\omega(y)-\rho) \cH^{\omega}(y) \diff y .
\]
}
Then, for $x \in (0,\infty)$, we can write
\begin{equation}\label{Sc:Fn:Omega:Prime:Positive}
\cH^{\omega \prime}(x) = Z_{\rho}^\prime(x-a;\Phi(\phi)) + \int_{a}^0  W'_{\rho}(x-y)( \omega(y)-\rho )  \cH^{\omega}(y) \diff y .
\end{equation}

On the one hand, it follows from (the proof of) Proposition~2 in \cite{renaud2019} and from Theorem~1.2 in \cite{loeffen-renaud_2010}, that, under our assumption on the Lévy measure, the function $x\mapsto Z_{\rho}^\prime(x-a;\Phi(\phi))$ is log-convex on $(a,\infty)$ and the function $x \mapsto W'_{\rho}(x-y)$ is log-convex on $(y,\infty)$. Hence, using that Riemann integrals are limits of partial sums and using the fact that $y \mapsto ( \omega(y)-\rho ) \cH^{\omega}$ is non-negative, we conclude that $x \mapsto \int_{a}^0  W'_{\rho}(x-y)( \omega(y)-\rho )  \cH^{\omega}(y) \diff y$ is log-convex on $(0,\infty)$. Thus, $\cH^{\omega \prime}$ is log-convex on $(0,\infty)$.
%
\end{proof}



\section{Optimisation problem}\label{Sec:Optim:Dividend}

Now, we go back to the optimisation problem described in Section~\ref{sect:optimisation-problem}. Let us fix the discount rate $q >0$ and the bankruptcy rate function $\omega$, which is a discounting intensity with $\rho=0$.

As motivated in \cite{albrecher-et-al_2011}, the monotonicity of $\omega$ models the fact that, the deeper the process dives into the red zone, the higher the rate at which bankruptcy is declared increases. The extra assumption that it is a piecewise continuous function is arguably very mild from a modelling perspective, while it has the mathematical advantage of yielding analytical properties of the associated Omega scale functions that are in line with those for classical scale functions.

For simplicity, let us define $\homega(x) := q + \omega(x)$. Note that $\homega$ is a discounting intensity, as in Definition~\ref{def:bankruptcy-rate-function}, with $\rho=q$. In particular, we have that $\homega(x) = \phi_q$ for all $x \in (-\infty,a)$, where $\phi_q := q + \phi$.

\subsection{Performance function}

It is interesting to note that, as in \cite{renaud2019}, we have the following alternative representation of the performance function.
\begin{lemma}\label{lemma:another-representation}
For any $\pi \in \Pi$, we have
\begin{equation*}
v_\pi(x) = \e_x \left[ \int_0^\infty \mathrm{e}^{- \int_0^t \omega_q (U^\pi_s) \mathrm{d}s} \mathrm{d}L^\pi_t \right] , \quad x \in \reals.
\end{equation*}
\end{lemma}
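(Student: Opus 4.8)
The plan is to condition on the entire path of the controlled process and exploit the independence of the exponential clock $\mathbf{e}_1$ from the filtration. Write $A_t := \int_0^t \omega(U^\pi_s)\diff s$, which — because $\omega$ is bounded and non-negative and $U^\pi$ is c\`adl\`ag — is a continuous, non-decreasing, $\mathcal{F}_\infty$-measurable functional of the path. Since $\ruintime = \inf\{t>0 : A_t > \mathbf{e}_1\}$ with $A$ continuous and non-decreasing, and since $\mathbf{e}_1$ is independent of $\mathcal{F}_\infty$ and unit-rate exponential, the event $\{\ruintime > t\}$ agrees (up to the $\p$-null boundary set $\{A_t = \mathbf{e}_1\}$) with $\{A_t \leq \mathbf{e}_1\}$, yielding the key survival identity
\[
\p_x(\ruintime > t \mid \mathcal{F}_\infty) = \mathrm{e}^{-A_t} = \mathrm{e}^{-\int_0^t \omega(U^\pi_s)\diff s} , \quad t \geq 0 .
\]

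First I would rewrite the performance function with an indicator, $v_\pi(x) = \e_x\big[\int_0^\infty \mathrm{e}^{-qt}\ind_{\{t<\ruintime\}}\diff L^\pi_t\big]$, and then apply the tower property by conditioning on $\mathcal{F}_\infty$. As $L^\pi$ and $A$ are $\mathcal{F}_\infty$-measurable while $\mathbf{e}_1$ is not, a conditional Fubini--Tonelli argument lets me interchange the pathwise Stieltjes integral $\diff L^\pi_t$ with the conditional expectation over $\mathbf{e}_1$, so that
\[
\e_x\!\left[\int_0^\infty \mathrm{e}^{-qt}\ind_{\{t<\ruintime\}}\diff L^\pi_t \,\middle|\, \mathcal{F}_\infty\right] = \int_0^\infty \mathrm{e}^{-qt}\,\p_x(\ruintime > t \mid \mathcal{F}_\infty)\,\diff L^\pi_t = \int_0^\infty \mathrm{e}^{-qt-\int_0^t \omega(U^\pi_s)\diff s}\,\diff L^\pi_t .
\]
Writing $qt = \int_0^t q\,\diff s$ and combining the exponentials, the integrand becomes $\mathrm{e}^{-\int_0^t (q+\omega(U^\pi_s))\diff s} = \mathrm{e}^{-\int_0^t \homega(U^\pi_s)\diff s}$ by the definition $\homega = q + \omega$. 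Taking the outer expectation $\e_x[\cdot]$ then delivers exactly the claimed representation.

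The routine steps — the survival identity and the recombination of exponentials — are immediate, so I expect the only delicate point to be the rigorous justification of the interchange of the random Stieltjes measure $\diff L^\pi_t$ with the conditional expectation over $\mathbf{e}_1$. This is legitimate because all the integrands are non-negative: $L^\pi$ is non-decreasing, so $\diff L^\pi$ is a (random) positive measure, and $\mathrm{e}^{-qt}\ind_{\{t<\ruintime\}} \geq 0$. Hence Tonelli's theorem applies on the product of the path space (equipped with $\diff L^\pi$) and the probability space carrying $\mathbf{e}_1$, and the boundary event $\{A_t = \mathbf{e}_1\}$ is $\p$-negligible for each $t$ since $\mathbf{e}_1$ has a density. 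This is precisely the same mechanism that underlies the passage from classical exponential discounting to an occupation-time representation in \cite{renaud2019}.
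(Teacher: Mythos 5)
Your argument is correct and is essentially the same as the paper's proof: condition on $\mathcal{F}_\infty$, use the survival identity $\p_x(\ruintime > t \mid \mathcal{F}_\infty) = \mathrm{e}^{-\int_0^t \omega(U^\pi_s)\,\mathrm{d}s}$ coming from the independence of $\mathbf{e}_1$, and interchange the expectation with the Stieltjes integral $\mathrm{d}L^\pi_t$. Your additional remarks on the Tonelli justification and the negligibility of the boundary event $\{A_t = \mathbf{e}_1\}$ only make explicit what the paper leaves implicit.
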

\begin{proof}
By definition (see Equation~\eqref{omega-ruin}),
\[
\ruintime = \inf \left\lbrace t>0 \colon \int_0^t \omega(U^\pi_s) \mathrm{d}s > \mathbf{e}_1 \right\rbrace ,
\]
where $\mathbf{e}_1$ is an exponentially distributed random variable with unit mean that is independent of $\mathcal{F}_\infty$. Consequently, for $t>0$, we can write
\[
\left\lbrace \sigma_\omega^\pi > t \right\rbrace = \left\lbrace \int_0^t \omega(U^\pi_s) \mathrm{d}s \leq \mathbf{e}_1 \right\rbrace
\]
and hence
\[
\e_x \left[ \int_0^{\sigma_\omega^\pi} \mathrm{e}^{-q t} \mathrm{d}L^\pi_t \right] = \e_x \left[ \int_0^\infty \mathrm{e}^{-q t} \e_x \left[ \ind_{\{\sigma_\omega^\pi > t\}} \vert \mathcal{F}_\infty \right] \mathrm{d}L^\pi_t \right] = \e_x \left[ \int_0^\infty \mathrm{e}^{-q t - \int_0^t \omega(U^\pi_s) \mathrm{d}s} \mathrm{d}L^\pi_t \right] .
\]
\end{proof}

In other words, the current control problem is equivalent to a control problem with an infinite time horizon in which the dividend payments are penalized by a level-dependent discounting. See Section~2 in \cite{renaud2019} for more interpretations. In addition, the identity in Lemma~\ref{lemma:another-representation} will be used in the verification procedure of the control problem.

\subsection{Barrier strategies}\label{sect:barrier-strategies}

Of particular interest to our problem is the family of (horizontal) barrier strategies. For $b \in \reals$, the barrier strategy at level $b$ is the strategy denoted by $\pi^b$ and with cumulative amount of dividends paid until time $t$ given by $L_t^b = \left( \sup_{0<s\leq t} X_s - b \right)_+$, for $t>0$. If $X_0 = x > b$, then $L^b_0 = x-b$. Note that, if $b \geq 0$, then $\pi_b \in \Pi$. The corresponding value function is thus given by
\[
v_b (x) = \e_x \left[ \int_0^{\ruinbarrier} \mathrm{e}^{-q t} \mathrm{d}L_t^b \right] , \quad x \in \reals ,
\]
where $\ruinbarrier$ is the termination/bankruptcy time corresponding to the controlled process $U^b_t = X_t - L_t^b$.

First, let us compute the performance function of an arbitrary barrier strategy. Following the same line of reasoning as in the proof of Theorem~4.1 of \cite{czarna-et-al_2020} or the proof of Proposition 1 in \cite{renaud2019}, we have the following result. The details are left to the reader.
\begin{proposition}\label{prop:perf-barrier}
Fix $b \geq 0$. We have
\[
v_b (x) = \e_x \left[ \int_0^{\ruinbarrier} \mathrm{e}^{-q t} \mathrm{d}L_t^b \right] =
\begin{cases}
\frac{\cH^{\homega}(x)}{\cH^{\homega\prime}(b)} & \text{if $x \in (-\infty,b]$,} \\
x-b+ \frac{\cH^{\homega}(b)}{\cH^{\homega\prime}(b)} & \text{if $x \in [b, \infty)$,}
\end{cases}
\]
where $\cH^{\homega\prime}(0):=\cH^{\homega\prime}(0+)$.
\end{proposition}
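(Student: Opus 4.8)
The plan is to split the computation according to whether the process starts above or below the barrier, and to reduce the sub-barrier case to a single reflection/dividend identity for the controlled process.

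First I would dispose of the region $x \geq b$. Under $\pi^b$ the strategy pays the lump sum $L_0^b = x-b$ instantaneously at time $0$ and the controlled process then restarts from $b$; by the strong Markov property this gives $v_b(x) = (x-b) + v_b(b)$, which is exactly the second branch of the formula as soon as the first branch has been established at $x=b$.

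For $x \leq b$ I would work from the alternative representation of Lemma~\ref{lemma:another-representation},
\[
v_b(x) = \e_x\!\left[\int_0^\infty \mathrm{e}^{-\int_0^t \homega(U^b_s)\,\diff s}\,\diff L^b_t\right],
\]
noting that under $\pi^b$ the controlled process $U^b$ is $X$ reflected at $b$ from above and that $L^b$ increases only on the set $\{U^b_t = b\}$. Since the effective killing rate is $\homega = q + \omega$, a discounting intensity with $\rho = q$, the natural object is the Omega scale function $\cH^{\homega}$ built in Theorem~\ref{thm:solution-integral-eq}. The core of the argument is then to prove the reflection identity
\[
\e_x\!\left[\int_0^\infty \mathrm{e}^{-\int_0^t \homega(U^b_s)\,\diff s}\,\diff L^b_t\right] = \frac{\cH^{\homega}(x)}{\cH^{\homega\prime}(b)}, \qquad x \leq b.
\]
I would establish this by a two-sided exit approximation: for a lower level $c < a$, first compute the expected discounted dividends accumulated by the reflected process before either being killed or first passing below $c$, using the Omega first-passage identities of \cite{li-palmowski_2018} (translated so as to be consistent with~\eqref{eq:functional-eq}); this produces a ratio of $\cH^{\homega}$-type quantities on $[c,b]$, whose denominator is $\cH^{\homega\prime}(b)$ by the smooth-fit condition $v_b'(b-) = 1$ at the reflecting barrier. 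Letting $c \to -\infty$ and using that $\homega \equiv q + \phi > 0$ on $(-\infty,a)$ — so that killing occurs before the process can escape downward — a monotone-convergence argument removes the boundary contribution at $c$ and leaves exactly $\cH^{\homega}(x)/\cH^{\homega\prime}(b)$. Evaluating at $x=b$ then closes the loop with the first step. The differentiability of $\cH^{\homega}$ at $b$ supplied by Theorem~\ref{thm:solution-integral-eq} is what makes the smooth-fit normalisation legitimate.

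The main obstacle is the passage from the killed first-passage identities of \cite{li-palmowski_2018}, which concern a process that is \emph{not} reflected, to the reflected/dividend identity above. One must either assemble the reflected quantity from these building blocks by excursion theory at the barrier $b$, or carefully justify the interchange of the limit $c \to -\infty$ with the expectation together with the vanishing of the lower boundary term; the uniform exponential killing below $a$ is what makes both routes work. This is precisely the step carried out in the proof of Proposition~1 of \cite{renaud2019} and Theorem~4.1 of \cite{czarna-et-al_2020}, and the computation here follows the same template with $W_q$ and $Z_q(\cdot;\Phi(\phi+q))$ replaced throughout by $\cH^{\homega}$.
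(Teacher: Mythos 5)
Your proposal is correct and follows essentially the same route as the paper, which in fact gives no written proof at all: it simply invokes the arguments of Theorem~4.1 of \cite{czarna-et-al_2020} and Proposition~1 of \cite{renaud2019} and leaves the details to the reader, exactly the template you describe (lump-sum reduction for $x\geq b$, then the reflected/dividend identity for $\cH^{\homega}$ obtained from the Omega first-passage identities with the lower level sent to $-\infty$). The only point I would tighten is the justification of the denominator: it comes out of the two-sided exit computation for the reflected process (or, equivalently, from normalising the $\cH^{\homega}$-proportional solution by the boundary condition $v_b'(b)=1$), rather than from ``smooth fit'' as an independent principle.
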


Note that $\cH^{\homega \prime}(0+)$ can only be different from $\cH^{\homega \prime}(0-)$ when $X$ {is a compound Poisson process} and $\omega(0-)>0$. See Theorem~\ref{thm:differentiability}.

We propose the following definition for our candidate optimal barrier level:
\begin{equation}\label{eq:optimal-level}
b_\omega^\ast = \argmin_{b \geq 0} \cH^{\homega\prime}(b) .
\end{equation}

If the function $x \mapsto \nu(x,\infty)$ is log-convex on $(0,\infty)$, then $b_{\omega}^\ast$ given by~\eqref{eq:optimal-level} is well defined and finite. Indeed, if $x \mapsto \nu(x,\infty)$ is log-convex, then from Proposition \ref{prop:log-convex-general} we have that $\cH^{\homega \prime}$ is non-negative and convex on $(0,\infty)$. In addition, thanks to the representation \eqref{Sc:Fn:Omega:Prime:Positive} and the fact that $\lim_{x\to\infty} Z_q'(x-a;\Phi(q_\phi)) = +\infty$ (see the proof of Proposition 2 in \cite{renaud2019}) we deduce that $\lim_{x \to \infty}\cH^{\homega\prime}(x) = +\infty$. It follows that $\cH^{\homega\prime}$ is ultimately strictly increasing. This allows us to conclude that $b_{\omega}^\ast$ is well defined.

\subsection{Solution of the control problem}

Let us state our second main result, asserting the optimality of the barrier strategy at level $b_{\omega}^\ast$ among all admissible strategies.

\begin{theorem}\label{thm:main-result}
If the tail of the Lévy measure is log-convex, then the barrier strategy at level $b_\omega^\ast$, as given by~\eqref{eq:optimal-level}, is an optimal strategy for the control problem. Moreover, we have
\[
v_\ast (x) =
\begin{cases}
\frac{\cH^{\homega}(x)}{\cH^{\homega\prime}(b_\omega^\ast)} & \text{for $x \in (-\infty,b_\omega^\ast]$,} \\
x-b_\omega^\ast + \frac{\cH^{\homega}(b_\omega^\ast)}{\cH^{\homega\prime}(b_\omega^\ast)} & \text{for $x \in [b_\omega^\ast,\infty)$.}
\end{cases}
\]
\end{theorem}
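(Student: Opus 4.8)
The plan is to follow the standard verification paradigm for singular stochastic control problems of this type, as established in \cites{loeffen_2008, renaud2019}. The strategy splits into two logically separate pieces: first, showing that the candidate value function $v_{b_\omega^\ast}$ is a genuine upper bound for $v_\pi$ over all $\pi \in \Pi$ (the \emph{verification inequality}), and second, confirming that the barrier strategy at level $b_\omega^\ast$ actually attains this bound (which is already granted by Proposition~\ref{prop:perf-barrier}). Since the second piece is immediate once $b_\omega^\ast$ is well defined (established above using Proposition~\ref{prop:log-convex-general}), the entire burden is on the verification inequality.

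First I would introduce the candidate function $V(x) := v_{b_\omega^\ast}(x)$ given explicitly by the two-piece formula in the statement, and record its smoothness: by Theorem~\ref{thm:solution-integral-eq} applied to $\homega$ (which is a discounting intensity with $\rho = q > 0$), the function $\cH^{\homega}$ is $C^1$ on $\reals$ when $X$ has paths of UBV, and $C^1$ away from the partition points in the BV case, with the key fact that $V$ is $C^1$ across the barrier level $b_\omega^\ast$ by the smooth-fit / definition of $b_\omega^\ast$ as a minimizer of $\cH^{\homega\prime}$. The crucial analytic input is convexity: by Proposition~\ref{prop:log-convex-general}, $\cH^{\homega\prime}$ is convex on $(0,\infty)$, so $\cH^{\homega\prime}(x) \geq \cH^{\homega\prime}(b_\omega^\ast)$ for all $x \geq 0$, which yields $V'(x) \geq 1$ for $x < b_\omega^\ast$ and $V'(x) = 1$ for $x > b_\omega^\ast$; hence $V' \geq 1$ everywhere on the relevant domain, the inequality needed to control dividend lumps.

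The core verification step is to show that $V$ satisfies the Hamilton--Jacobi--Bellman variational inequality, namely that $(\mathcal{A} - \omega_q)V(x) \leq 0$ pointwise (where $\mathcal{A}$ is the generator of $X$ and the level-dependent discounting $\omega_q$ replaces the constant $q$), together with $V' \geq 1$. The first inequality should hold with equality below the barrier --- because there $V$ is a multiple of $\cH^{\homega}$, which by its defining functional equation \eqref{eq:functional-eq} (equivalently \eqref{omega:Volterra}) is precisely the function annihilated by $(\mathcal{A} - \omega_q)$ --- and should hold as a strict inequality above the barrier, where one verifies $(\mathcal{A} - \omega_q)(x - b_\omega^\ast + \text{const}) \leq 0$ using $V' \geq 1$, the convexity of $V$, and the sign of $\psi'$. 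With the HJB inequality in hand, I would apply the Itô--Meyer / change-of-variables formula for the SNLP to the process $\mathrm{e}^{-\int_0^t \omega_q(U^\pi_s)\diff s} V(U^\pi_t)$ under an arbitrary admissible $\pi$, using the representation of the performance function in Lemma~\ref{lemma:another-representation}. The drift terms are nonpositive by the HJB inequality, the dividend terms are dominated using $V' \geq 1$ (so each unit of dividend paid decreases $V$ by at least that unit, accounting for the lump contributions via $0 \leq L^\pi_t - L^\pi_{t-} \leq U^\pi_{t-}\vee 0$), and after localizing by a sequence of stopping times and taking expectations, the local-martingale part vanishes; a limiting argument (monotone/dominated convergence, controlling $V$'s linear growth at $+\infty$) then delivers $V(x) \geq v_\pi(x)$.

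The main obstacle I anticipate is the technical care required in the BV case and at the discontinuity points of $\omega$. Theorem~\ref{thm:solution-integral-eq} only guarantees $C^1$ regularity away from the partition points $\{a_1,\dots,a_{n+1}\}$ (and merely continuity of $V'$ may fail at $a_{n+1}=0$ when $\omega(0-)>0$), so the Itô--Meyer formula must be invoked in a form valid for convex (or difference-of-convex) functions with a well-behaved second-derivative measure, and one must check that the jump discontinuities in $\cH^{\homega\prime}$ recorded in \eqref{eq:jump-discontinuities} contribute with the correct (nonpositive) sign to the generator term --- since $\omega$ is non-increasing, the downward jumps $\omega(a_i+) - \omega(a_i-) \leq 0$ should work in our favour, but this needs explicit verification. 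A secondary subtlety is justifying the sign of $(\mathcal{A}-\omega_q)V$ on the negative half-line, where $V$ is proportional to $\cH^{\homega}$ below the barrier but $\mathcal{A}$ probes values arbitrarily far below; here the fact that $\cH^{\homega}$ exactly solves the functional equation on all of $\reals$ is what closes the argument.
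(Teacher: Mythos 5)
Your proposal is correct and follows essentially the same route as the paper: establish $v_{b_\omega^\ast}'\geq 1$ and the variational inequality $(\Gamma-(q+\omega))v_{b_\omega^\ast}\leq 0$ (the paper's Lemma~\ref{lem_HJB}), then apply the Meyer--It\^o formula together with integration by parts against $Y_t=\exp(-\int_0^t\homega(U_s)\,\diff s)$, control the local-time contributions at the partition points via the monotonicity of $\omega$, localise, and pass to the limit, with attainment supplied by Proposition~\ref{prop:perf-barrier}. The one step you treat as immediate that actually requires work is that the Volterra equation implies $(\Gamma-(q+\omega))\cH^{\homega}=0$; the paper derives this by an explicit integration-by-parts computation using $(\Gamma-q)Z_q(\cdot-a;\Phi(\phi_q))=0$, not directly from the defining functional equation.
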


\section{Proof of Theorem~\ref{thm:main-result}.}\label{Sec:Verification}

This section is devoted to the proof Theorem~\ref{thm:main-result}. {Recall that we assume the tail of the Lévy measure is log-convex.}

\subsection{Variational inequalities}

First, set
\begin{equation}\label{eq:generator}
\Gamma v(x) = \gamma v^\prime (x)+\frac{\sigma^2}{2} v^{\prime \prime}(x) + \int_{(0,\infty)} \left( v(x-z)-v(x)+v'(x)z\ind_{(0,1]}(z) \right) \nu(\mathrm{d}z) ,
\end{equation}
where $v \colon \reals \to \reals$ is a \textit{sufficiently smooth} function, i.e., with first and second derivatives defined almost everywhere. If a derivative of $v$ does not exist at $x$, then it is replaced in~\eqref{eq:generator} by its left-hand version. In what follows, we will use the following notation:
\[
\left(\Gamma - (q+\omega) \right)v(x) := \Gamma v(x) - (q+\omega(x)) v(x) ,
\]
for each $x$ where the quantities are well defined.


\begin{lemma}\label{lem_HJB}
For $x \in (0,\infty)$, we have $v^\prime_{b^\ast}(x) \geq 1$ and, for $x \in \reals \setminus \{a_1,\dots,a_{n+1}\}$, we have
\[
(\Gamma-(q+\omega)) v_{b^\ast}(x) \leq 0 .
\]
\end{lemma}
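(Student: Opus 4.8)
The plan is to establish the two inequalities in Lemma~\ref{lem_HJB} separately, exploiting the explicit form of $v_{b^\ast}$ given in Proposition~\ref{prop:perf-barrier} together with the analytical properties of $\cH^{\homega}$ from Theorem~\ref{thm:solution-integral-eq} and the convexity of $\cH^{\homega\prime}$ from Proposition~\ref{prop:log-convex-general}. Recall that $b^\ast = b_\omega^\ast = \argmin_{b \geq 0} \cH^{\homega\prime}(b)$, so $\cH^{\homega\prime}(b^\ast)$ is the minimum value of $\cH^{\homega\prime}$ on $[0,\infty)$, and that $\cH^{\homega\prime}$ is non-negative, convex on $(0,\infty)$, and ultimately strictly increasing.

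\medskip

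\textbf{First inequality.} For $x \in (0,\infty)$ I would split into two regimes. If $x \geq b^\ast$, then $v_{b^\ast}(x) = x - b^\ast + \cH^{\homega}(b^\ast)/\cH^{\homega\prime}(b^\ast)$, so $v_{b^\ast}^\prime(x) = 1$ and the inequality holds with equality. If $x \in (0,b^\ast)$, then $v_{b^\ast}(x) = \cH^{\homega}(x)/\cH^{\homega\prime}(b^\ast)$, so $v_{b^\ast}^\prime(x) = \cH^{\homega\prime}(x)/\cH^{\homega\prime}(b^\ast)$; since $b^\ast$ minimises $\cH^{\homega\prime}$ over $[0,\infty)$, we have $\cH^{\homega\prime}(x) \geq \cH^{\homega\prime}(b^\ast)$ and hence $v_{b^\ast}^\prime(x) \geq 1$. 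This part is essentially immediate.

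\medskip

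\textbf{Second inequality.} I would again split according to whether $x > b^\ast$ or $x < b^\ast$. The key structural fact, analogous to the classical theory, is that $\cH^{\homega}$ solves an integro-differential equation reflecting its role as the $\homega$-scale function: one should have $(\Gamma - (q+\omega))\cH^{\homega}(x) = 0$ for $x$ away from the partition points. This should follow from the functional equation~\eqref{eq:functional-eq} (or its variant~\eqref{omega:Volterra}) defining $\cH^{\homega}$, using the fact that $\homega(x) = q + \omega(x)$ and that $\psi(\Phi(\phi_q)) - \phi_q = 0$. For $x < b^\ast$, since $v_{b^\ast}(x) = \cH^{\homega}(x)/\cH^{\homega\prime}(b^\ast)$ is a positive multiple of $\cH^{\homega}$, this identity gives $(\Gamma - (q+\omega))v_{b^\ast}(x) = 0 \leq 0$. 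For $x > b^\ast$, where $v_{b^\ast}$ is affine with slope one, I would write $(\Gamma - (q+\omega))v_{b^\ast}(x)$ explicitly and compare it with the (vanishing) value obtained by replacing $v_{b^\ast}$ with the linear extension of its formula; the difference involves the jump integral over regions where the controlled process would re-enter the band below $b^\ast$, and the sign is controlled by the convexity of $\cH^{\homega\prime}$. Concretely, one shows the difference $(\Gamma-(q+\omega))v_{b^\ast}(x)$ for $x>b^\ast$ can be expressed through $\int_{(0,\infty)}\bigl(v_{b^\ast}(x-z) - [\text{affine extension}](x-z)\bigr)\nu(\diff z)$, which is non-positive because $v_{b^\ast}$ lies below its own tangent line at $b^\ast$ on $(0,b^\ast)$ precisely when $\cH^{\homega\prime}$ is increasing there, i.e.\ when $\cH^{\homega\prime}$ is convex with minimum at $b^\ast$.

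\medskip

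The main obstacle I anticipate is the rigorous justification of the second inequality for $x > b^\ast$, where one must carefully handle the jump integral in $\Gamma$ as the controlled process crosses the barrier: the controlled value function $v_{b^\ast}$ is only piecewise smooth (affine above $b^\ast$, a scale-function multiple below), so the generator acts differently on the two pieces and one must track the contribution of jumps that carry the process from above $b^\ast$ to below it. The convexity of $\cH^{\homega\prime}$ on $(0,\infty)$ is exactly what guarantees the correct sign of this contribution, so the argument hinges on using Proposition~\ref{prop:log-convex-general} to control $v_{b^\ast}(x-z)$ relative to its affine majorant. Care is also needed at the partition points $a_1,\dots,a_n,0$, which is why these are excluded from the statement, and in the BV case where one must use left-derivatives as stipulated in~\eqref{eq:generator}.
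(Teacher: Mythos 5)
Your first inequality is fine and is the paper's argument: on $(0,b^\ast)$ one has $v_{b^\ast}'(x)=\cH^{\homega\prime}(x)/\cH^{\homega\prime}(b^\ast)\geq 1$ by minimality of $\cH^{\homega\prime}(b^\ast)$ over $[0,\infty)$, and the derivative equals $1$ above the barrier. For the variational inequality below the barrier your plan is also the right one, but you leave the central step as an assertion: that $(\Gamma-(q+\omega))\cH^{\homega}=0$ away from the partition points ``should follow from the functional equation.'' That identity is the technical heart of the paper's proof: one writes $\cH^{\homega}(x)=Z_q(x-a;\Phi(q+\phi))+\int_a^x W_q(x-y)\,\omega(y)\,\cH^{\homega}(y)\,\diff y$ (equation~\eqref{omega:Volterra} with $p=q$), uses $(\Gamma-q)Z_q(\cdot-a;\Phi(q+\phi))=0$, and shows by an integration by parts that $(\Gamma-q)$ applied to the convolution term produces exactly $\omega(x)\cH^{\homega}(x)$. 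Without that computation the proof of the region $x<b^\ast$ is not complete.

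The genuine gap is your treatment of $x>b^\ast$. You compare $v_{b^\ast}$ with its affine extension $A(y)=y-b^\ast+v_{b^\ast}(b^\ast)$ and claim $\int_{(0,\infty)}\bigl(v_{b^\ast}(x-z)-A(x-z)\bigr)\nu(\diff z)\leq 0$ because $v_{b^\ast}$ lies below its tangent line at $b^\ast$. This fails for two reasons. First, $v_{b^\ast}\leq A$ holds only on $[0,\infty)$, where $v_{b^\ast}'\geq 1$; on $(-\infty,a)$ one has $v_{b^\ast}(y)=\mathrm e^{\Phi(\phi)(y-a)}/\cH^{\homega\prime}(b^\ast)\to 0$ while $A(y)\to-\infty$, so $v_{b^\ast}>A$ for $y$ negative enough, and jumps from $x$ land precisely in that region, so the integrand is not sign-definite. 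Second, an affine function need not be in the domain of $\Gamma$ (the term $\int_{(1,\infty)}z\,\nu(\diff z)$ may diverge), and even formally $(\Gamma-q)A(x)=\psi'(0+)-qA(x)$ has no controlled sign. The argument the paper intends (deferred to Loeffen's paper) compares $v_{b^\ast}$ with the harmonic extension $y\mapsto\cH^{\homega}(y)/\cH^{\homega\prime}(x)$ normalized at the evaluation point $x$: the difference $g:=v_{b^\ast}-\cH^{\homega}/\cH^{\homega\prime}(x)$ is non-negative and non-decreasing on $(-\infty,x]$, because $\cH^{\homega\prime}(y)\leq\cH^{\homega\prime}(x)$ on $[b^\ast,x]$ (convexity plus minimality at $b^\ast$) and $\cH^{\homega\prime}\geq 0$ elsewhere, and it satisfies $g'(x)=0$, $g''(x-)\leq 0$; hence $(\Gamma-(q+\omega))v_{b^\ast}(x)=(\Gamma-(q+\omega))g(x)\leq 0$. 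So the convexity of $\cH^{\homega\prime}$ does drive the sign, as you guessed, but through a different comparison function than the one you chose.
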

\begin{proof}
{Recall Proposition~\ref{prop:perf-barrier} and Theorem~\ref{thm:differentiability} to deduce the necessary smoothness properties of $v_{b_{\omega}^\ast}$ for what follows.} By the definition of a barrier strategy as well as the definition of $b_{\omega}^\ast$, we have that $v'_{b_{\omega}^\ast}(x) = 1$ for $x \in [b_{\omega}^\ast,\infty)$ and $v'_{b_{\omega}^\ast}(x) \leq 1$ for $x \in (0,b_{\omega}^\ast)$.

Now, we focus on the proof of the variational inequality {for $x \in (-\infty,b^\ast_\omega)\setminus\lbrace a_1,\cdots,a_{n+1} \rbrace$. Recall that $\homega(x) = q + \omega(x)$. First, by the Strong Markov property, we can write
\[
v_{b^\ast_{\omega}}(x) =  \e_{x} \left[ \mathrm e^{-\int_0^{\tau_{b^{\ast}_{\omega}}^+}\omega_q(X_s) \diff s } \right] v_{b^\ast_{\omega}}(b^{\ast}_{\omega}).
\]
Next, we observe that the process
\[
\mathrm e^{-\int_0^{t\wedge\tau_{b^{\ast}_{\omega}}^+} \omega_q(X_s) \diff s} \e_{X_{t\wedge\tau_{b^{\ast}_{\omega}}^+}} \left[ \mathrm e^{-\int_0^{\tau_{b^{\ast}_{\omega}}^+} \omega_q(X_s) \diff s } \right] , \quad t\geq 0,
\]
is a $\p_x$-martingale. Indeed, by the Markov property of $X$, we have
\begin{align*}
\e_x\left[ \mathrm e^{-\int_0^{\tau_{b^{\ast}_{\omega}}^+} \omega_q(X_s) \diff s } \Big| \mathcal{F}_t\right]
&= \Ind_{\lbrace t \geq \tau_{b^{\ast}_{\omega}}^+ \rbrace} \mathrm e^{-\int_0^{\tau_{b^{\ast}_{\omega}}^+} \omega_q(X_s) \diff s } + \Ind_{\lbrace t < \tau_{b^{\ast}_{\omega}}^+ \rbrace} \mathrm e^{-\int_0^{t}\omega_q(X_s) \diff s } \e_x\left[ \mathrm e^{-\int_t^{\tau_{b^{\ast}_{\omega}}^+} \omega_q(X_s) \diff s } \Big| \mathcal{F}_t\right] \\
&= \Ind_{\lbrace t \geq \tau_{b^{\ast}_{\omega}}^+ \rbrace} \mathrm e^{-\int_0^{\tau_{b^{\ast}_{\omega}}^+} \omega_q(X_s) \diff s } + \Ind_{\lbrace t < \tau_{b^{\ast}_{\omega}}^+ \rbrace} \mathrm e^{-\int_0^{t}\omega_q(X_s) \diff s } \e_{X_t}\left[ \mathrm e^{-\int_0^{\tau_{b^{\ast}_{\omega}}^+} \omega_q(X_s) \diff s } \right] \\
&= \mathrm e^{-\int_0^{t\wedge\tau_{b^{\ast}_{\omega}}^+} \omega_q(X_s) \diff s } \e_{X_{t\wedge\tau_{b^{\ast}_{\omega}}^+}} \left[ \mathrm e^{-\int_0^{\tau_{b^{\ast}_{\omega}}^+} \omega_q(X_s) \diff s } \right].
\end{align*}
It now follows that the right-hand side is a $\p_x$-martingale. In addition, we obtain that $\mathrm e^{-\int_0^{t\wedge\tau_{b^{\ast}_{\omega}}^+} \omega_q(X_s) \diff s } v_{b^\ast_{\omega}}(X_{t\wedge\tau_{b^{\ast}_{\omega}}^+})$ is a $\p_x$-martingale, from which we deduce that (see, e.g., \cite[Equation~(12)]{biffis-kyprianou_2010} or \cite[Lemma 5.7]{NOBA2021})
\[
(\Gamma - (q+\omega))v_{b^\ast_{\omega}}(x) = 0 , \quad x \in (-\infty,b^\ast_{\omega})\setminus\lbrace a_1,\cdots,a_{n+1} \rbrace .
\]
We remark that $x\mapsto (\Gamma - (q+\omega))v_{b^\ast_{\omega}}(x)$ is continuous on $(-\infty,b^\ast_{\omega})\setminus\lbrace a_1,\cdots,a_{n+1} \rbrace$, thanks to Theorem~\ref{thm:differentiability}.
}

Finally, the verification of the variational inequality on $[b_{\omega}^\ast, \infty)$ can be performed as in \cite{loeffen_2008} since, under our assumptions, $\cH^{\homega \prime}$ is log-convex, thanks to Proposition~\ref{prop:log-convex-general}. We leave the details to the reader.
\end{proof}

\subsection{Verification procedure}

Let $\pi \in \Pi$ be an arbitrary admissible strategy. For simplicity, let us write $h=v_{b_\omega^\ast}$, $L=L^\pi$ and $U=U^\pi$. {In Appendix~\ref{proof-of-BV-density}, we show that $h$ has a density that is of finite variation.} As a consequence, $h$ can be written as the difference of two convex functions (see, e.g., Theorem~A on p.\ 23 in \cite{roberts-varberg_1973}) and, by the Meyer-It\^{o} Formula (see Theorem~70 on p.\ 218 in \cite{protter_2004}), we can write
\begin{align*}
h (U_t) - h(U_0) &= \int_{0+}^t h^{\prime} (U_{s-}) \diff U_s + \frac12 \int_{-\infty}^\infty \ell_t^a \mu (\diff a) \\
& \qquad + \sum_{0 < s \leq t} \left\lbrace h (U_{s}) - h (U_{s-}) - h^{\prime} (U_{s-}) \Delta U_s \right\rbrace ,
\end{align*}
where $\mu (\diff a)$ is the weak second derivative of $h$ and $\ell_t^a$ is the semimartingale local time of $U$. From Corollary~1 on p.\ 219 in \cite{protter_2004} (see also, e.g., the proofs of Lemma~4.2 and Theorem~1.2 in \cite{kyprianou-et-al_2010}), it is known that, if $\sigma>0$, then by the occupation formula we have
\begin{align*}
\int_{-\infty}^\infty \ell_t^a \mu (\diff a) = \sigma^2 \int_{0}^t h''(U_s) \diff s .
\end{align*}
Otherwise, this integral is equal to zero.


%

Consequently, we can further write (after several standard manipulations)
\begin{align*}
h (U_t) - h (U_0) &= \int_{0+}^t \Gamma h (U_s) \diff s - \int_{0}^t h^{\prime} (U_s) \diff L^c_s \\
& \qquad - \sum_{0 < s \leq t} \left\lbrace h (X_s-L_{s-}) - h (X_s - L_{s-} - \Delta L_s) \right\rbrace + M_t ,
\end{align*}
where $L^c$ is the continuous part of $L$ and where $M$ is a local martingale. In this last expression, the quantity $\Gamma h (U_s)$ is well defined, thanks to Theorem~\ref{thm:differentiability}.

If we define the process $Y_t = \exp \left( - \int_0^t \homega(U_s) \diff s \right)$, which has paths of BV and is such that
\[
\mathrm{d}Y_t = - \homega(U_t) Y_t \diff t , \quad Y_0=1,
\]
then by the Integration by Parts Formula for semimartingales (see Corollary 2 on p.\ 68 in \cite{protter_2004}) we have
\begin{align*}
Y_t h(U_t) - h (U_0) &= \int_{0+}^t Y_s\left( \Gamma h (U_s) - (\omega(U_s)+q) h (U_s) \right) \diff s - \int_{0}^t Y_s h^{\prime} (U_s) \diff L^c_s \\
& \qquad - \sum_{0 < s \leq t} Y_s \left\lbrace h (X_s-L_{s-}) - h (X_s - L_{s-} - \Delta L_s) \right\rbrace + \int_{0+}^t Y_s \diff M_s ,
\end{align*}
where $t \mapsto \int_{0+}^t Y_s \diff M_s$ is also a local martingale.

Since by admissibility $\int_{(0,\infty)} \ind_{\lbrace U_t < 0 \rbrace} \diff L_t = 0$ and since, by Lemma~\ref{lem_HJB}, $h^\prime (x) \geq 1$ for all $x \in (0,\infty)$, then
\[
\int_{0}^t Y_s h^{\prime} (U_s) \diff L^c_s \geq \int_{0}^t Y_s \diff L^c_s ,
\]
and $h (X_s-L_{s-}) - h (X_s - L_{s-} - \Delta L_s) \geq \Delta L_s$, almost surely. Also, since for at least all $x \in \reals \setminus \{a_1,\dots,a_{n+1}\}$ we have
\[
(\Gamma-(q+\omega)) h(x) \leq 0 ,
\]
then we can further write
\begin{align*}
Y_t h(U_t) - h (U_0) &\leq - \int_{0}^t Y_s \diff L^c_s - \sum_{0 < s \leq t} Y_s \left\lbrace \Delta L_s \right\rbrace + \int_{0+}^t Y_s \diff M_s \\
&= - \int_{0}^t Y_s \diff L_s + \int_{0+}^t Y_s \diff M_s ,
\end{align*}
almost surely. Indeed, since $\int_{(0,\infty)} \ind_{\lbrace U_t < 0 \rbrace} \diff L_t = 0$, then $U$ behaves as the SNLP $X$ (which does not have monotone paths) below zero and it is well known that, for any $x \in \reals$, the set $\{t \geq 0 \colon X_t = x\}$ has zero Lebesgue measure.

Now, consider $(T_n)_{n\geq 1}$ a localising sequence. Applying the last inequality at $t=T_n$ and taking expectations, we can write
\[
h(x) \geq \e_x \left[ Y_{T_n} h(U_{T_n}) \right] + \e_x \left[ \int_{0}^{T_n} Y_s \diff L_s \right] .
\]
Taking the limits as $n \to \infty$, and since $h$ is non-negative, we get
\[
h(x) \geq \e_x \left[ \int_{0}^{\infty} Y_t \diff L_t \right] = \e_x \left[ \int_0^\infty \mathrm{e}^{-\int_0^t \homega(U_s) \diff s} \diff L_t \right] = v_\pi(x) ,
\]
where the last equality is taken from Lemma~\ref{lemma:another-representation}. Written differently, we have obtained that $h(x) \equiv v_{b^\ast_\omega}(x) \geq v_\pi(x)$, for all $x \in \reals$. As $\pi$ is arbitrary, the result follows.

\section{Sensitivity analysis with respect to the bankruptcy rate function}\label{Sec:Numerical}

Intuitively, if $\omega$ is large, then the penalisation for being in the \textit{red zone} is large (see Lemma~\ref{lemma:another-representation}), which means that bankruptcy will occur sooner than if $\omega$ were otherwise smaller; in this case, as with classical ruin, the process should stay away from zero.  Conversely, if $\omega$ is small, then bankruptcy will occur later. Consequently, in that case, it is less risky to enter the red zone, so it is expected that $b_\omega^*$ will be (can afford to be) closer to zero. In conclusion, it is expected that $b_\omega^*$ will be monotonic increasing with respect to $\omega$.

In this section, we will illustrate this by providing numerical illustrations of the impact of the bankruptcy rate function on the value function and on the optimal barrier level. In what follows, we assume that
\[
X_t = X_0 + \mu t + \sigma B_t - \sum_{i = 1}^{N_t} Y_i ,
\]
where $\mu = 0.075$ and $\sigma = 0.25$, $B=\{B_t, t\geq 0\}$ is a standard Brownian motion, $\{N_t, t\geq 0\}$ is a homogeneous Poisson process with arrival rate $\lambda = 0.5$ (independent of $B$) and $(Y_i)_{i \geq 1}$ is a sequence of independent exponentially distributed random variables with parameter $\alpha = 9$. Note that this model falls under our assumptions of a Lévy measure being absolutely continuous with a density that is log-convex (which implies that the tail is log-convex). In other words, Theorem~\ref{thm:main-result} is valid for this model.

In this model, the corresponding classical scale function $W_q$ can be computed explicitly as 
\[
W_q(x) = \sum_{i=1}^3 D_i \mathrm{e}^{\theta_i x},
\]
where $\theta_i, \, i = 1,2,3$ are the distinct roots of $s \mapsto \psi(s) - q$ satisfying $\theta_1 > 0$ and $\theta_2,\, \theta_3 < 0$, and where $D_i = \frac{1}{\psi'(\theta_i)}$. Recall that the scale function $Z_q$ can then be computed explicitly by simple integration; see Equation~\eqref{eq:Zq-def}.

In what follows, to help with comparisons, each bankruptcy rate function will be such that $\phi = 1.5$ and $a = -1$, according to the notation introduced in Definition~\ref{def:bankruptcy-rate-function}. As $W_q$ and $Z_q$ are known explicitly, we will be able to compute values of the corresponding Omega scale functions using {Picard iterations (see, e.g., \cite[(46)--(49)]{czarna-et-al_2019})}.

Also, throughout this analysis, we will compare our results with the case of Parisian ruin at rate $\phi$, i.e., corresponding to $\omega_{\textsc{P}}(x) := \phi \ind_{(-\infty,0)} (x)$.

Finally, let us fix $q = 0.05$.

\subsection{Step functions}

First, let us consider step functions as in Equation~\eqref{omega-step-function}. For $n=1,2, 3, 4, 5$, set
\begin{equation}\label{eq:step-functions}
\omega_n(x) := \phi \ind_{(-\infty,a)}(x) + p_1 \ind_{[a,a_2)}(x) + \dots + p_n \ind_{[a_n, 0)}(x) ,
\end{equation}
with $a_i = a/i$ and $p_i=\phi/(i+1)$. Recall that $a=-1$ and $\phi=1.5$. Note that, if $n_1<n_2$, then $\omega_{n_2}(x) < \omega_{n_1}(x)$ for all $x<0$. Also, let us define $\omega_0:=\omega_{\textsc{P}}$. See Figure~\ref{Fig:Omega_Sequence_Step}. 

\begin{figure}[h!]
	\centering
	\includegraphics[scale=1]{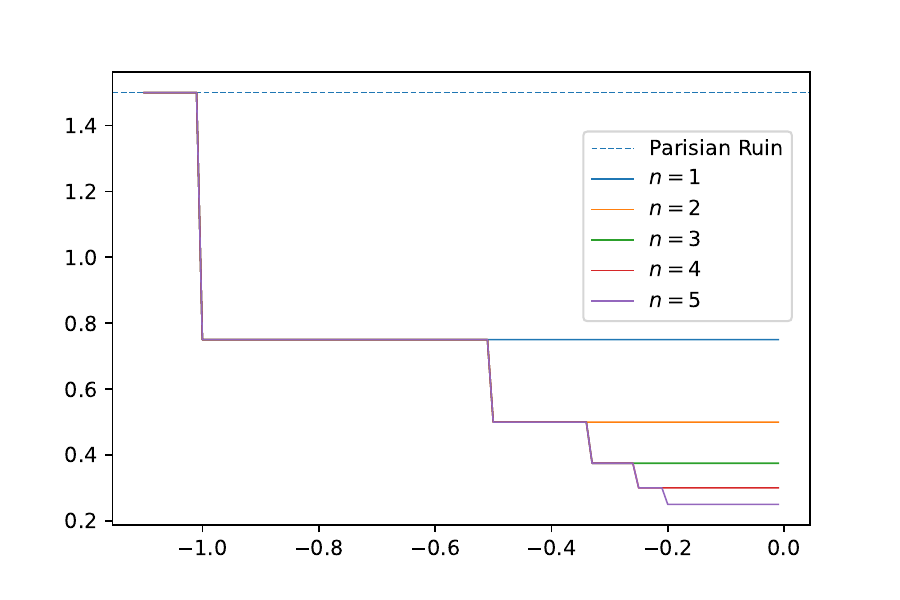}
	\caption{Bankruptcy rate functions defined in Equation~\eqref{eq:step-functions}.}
	\label{Fig:Omega_Sequence_Step}
\end{figure}

In Figure~\ref{Fig:Omega:Step}, we plot the value functions corresponding to the bankruptcy rate functions $\omega_n$ defined above (in Equation~\eqref{eq:step-functions}), with the corresponding optimal barrier levels $b_n^*$. Note that, as expected, we have that if $n_1<n_2$, then $b_{n_2}^* \leq b_{n_1}^*$. See also Figure~\ref{Fig:b_omega_nsteps}.
%

%
%
\begin{figure}[h!]
	\centering
	\includegraphics[scale=1]{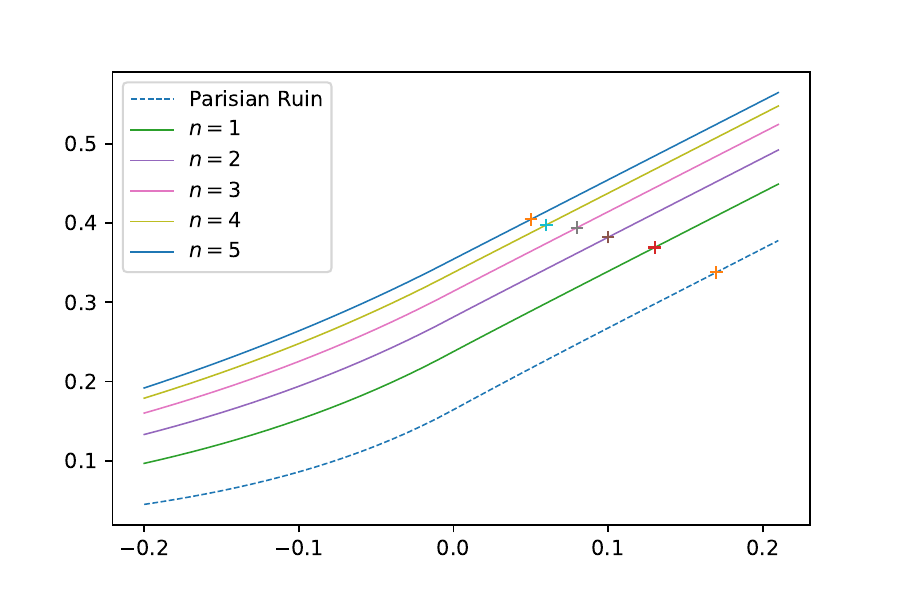}
	\caption{Value functions corresponding to the step bankruptcy rate functions defined in Equation~\eqref{eq:step-functions}, with corresponding optimal barrier levels indicated by a cross.}
	\label{Fig:Omega:Step}
\end{figure}

\begin{figure}[h!]
	\centering
	\includegraphics[scale=1]{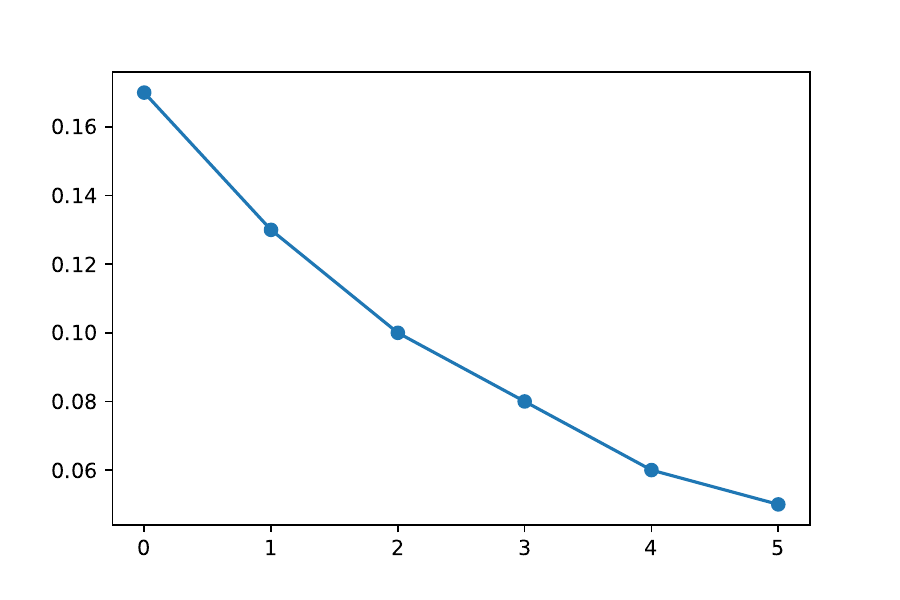}
	\caption{Optimal barrier levels corresponding to the step bankruptcy rate functions defined in Equation~\eqref{eq:step-functions}.}
	\label{Fig:b_omega_nsteps}
\end{figure}

\subsection{Affine functions}

Now, we consider affine bankruptcy rate functions, i.e., of the form
\begin{equation}\label{eq:affine-functions}
\omega_m(x) := \phi \ind_{(-\infty,a)} (x) + (\phi + m(x-a)) \ind_{[a,0)} (x) ,
\end{equation}
for different values of $m < 0$. Recall that $a=-1$ and $\phi=1.5$. Note that, if $m=0$, then $\omega_0=\omega_{\textsc{P}}$, i.e., it corresponds to Parisian ruin at rate $\phi$. Finally, if $m_1<m_2$, then $\omega_{m_1}(x) < \omega_{m_2}(x)$ for all $x \in (a,0)$. See Figure~\ref{Fig:Omega_Sequence_Affine}. 

\begin{figure}[h!]
	\centering
	\includegraphics[scale=1]{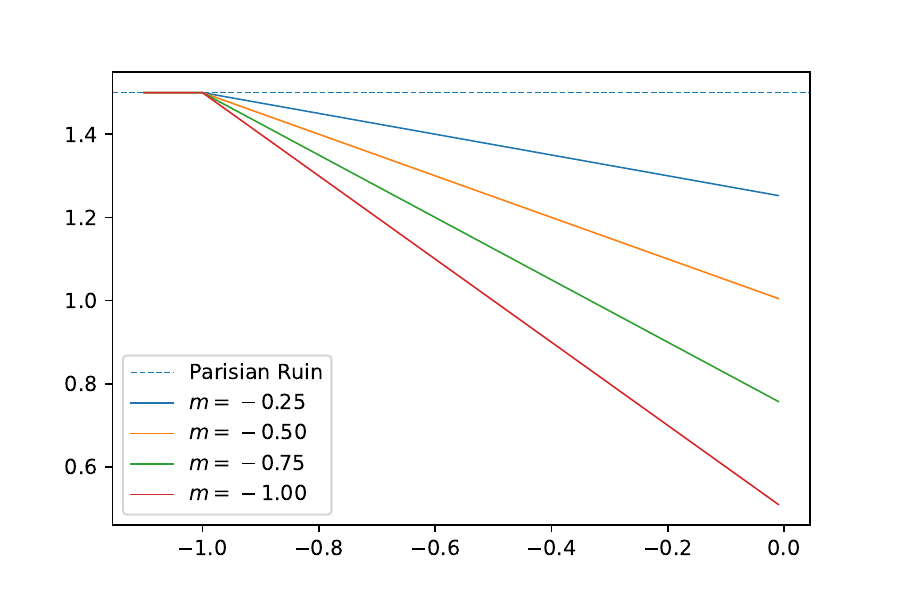}
	\caption{Bankruptcy rate functions defined in Equation~\eqref{eq:affine-functions}.}
	\label{Fig:Omega_Sequence_Affine}
\end{figure}

In Figure~\ref{Fig:Omega:Affine}, we plot the value functions corresponding to the bankruptcy rate functions $\omega_m$ defined above (in Equation~\eqref{eq:affine-functions}), with the corresponding optimal barrier levels $b_m^*$. Note that, as expected, we have that if $m_1<m_2$, then $b_{m_2}^* \leq b_{m_1}^*$. See also Figure~\ref{Fig:Affine:Barrier}.

\begin{figure}[h!]
	\centering
		\includegraphics[scale=1]{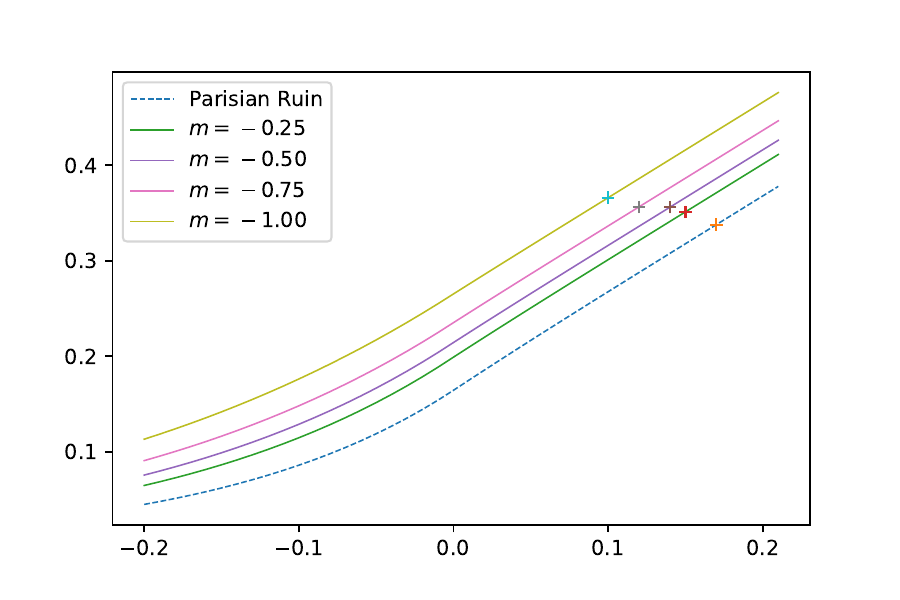}
	\caption{Value functions corresponding to the affine bankruptcy rate functions defined in Equation~\eqref{eq:affine-functions}, with corresponding optimal barrier levels indicated by a cross.}
	\label{Fig:Omega:Affine}
\end{figure}

\begin{figure}[h!]
	\centering
	\includegraphics[scale=1]{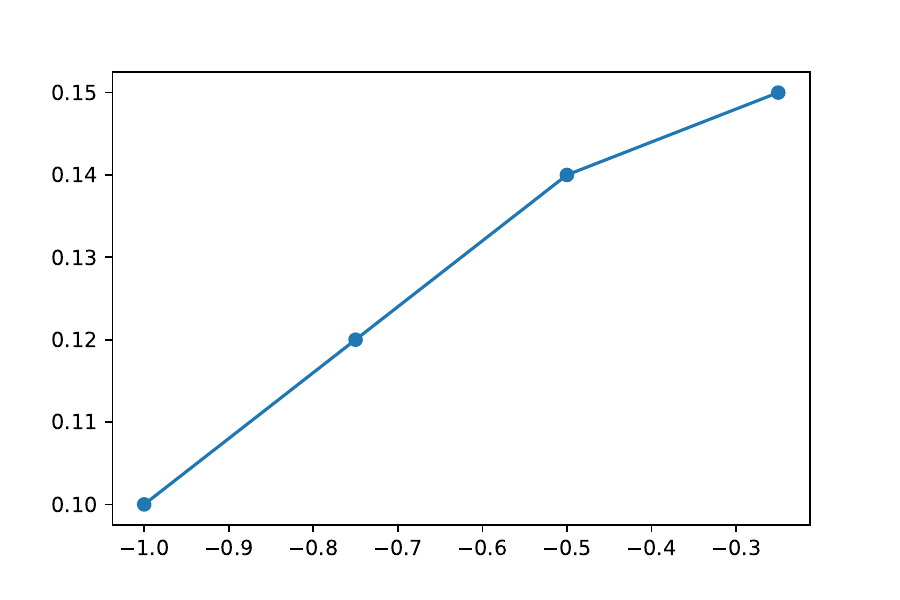}
	\caption{Optimal barrier levels corresponding to the affine bankruptcy rate functions defined in Equation~\eqref{eq:affine-functions}.}
	\label{Fig:Affine:Barrier}
\end{figure}

\section*{Acknowledgements}

We would like to express our gratitude to an anonymous reviewer for their patience and for their detailed reading of our manuscript during this lengthy reviewing process. Thanks to their helpful comments, significant improvements have been made.

Funding in support of this work was provided by a CRM-ISM Postdoctoral Fellowship from the Centre de recherches mathématiques (CRM) and the Institut des sciences mathématiques (ISM), by a FRQNT Postdoctoral Fellowship (369449) from the Fonds de Recherche du Qu\'ebec, and by Discovery Grants (RGPIN-2019-06538,RGPIN-2025-05758) from the Natural Sciences and Engineering Research Council of Canada (NSERC).

%
%
\bibliographystyle{abbrv}
\bibliography{references-SNLPs.bib, references_de-finetti.bib}

\appendix

\section{Proof of Lemma~\ref{lemma:identities}}\label{A:identities}

Our proof of~\eqref{eq:identity-Z} is based on the representation of $Z_r (\cdot;\Phi(s))$ given in~\eqref{eq:Zq-def} and on~\eqref{eq:identity-W}.  Take $x>0$ and use~\eqref{eq:Zq-def} to write
\begin{align*}
\int_0^x W_p(x-y) Z_r (y;\Phi(s)) \mathrm{d}y &= \int_0^x W_p(x-y) \left[ (s-r) \int_0^\infty \mathrm{e}^{-\Phi(s)z} W_r(y+z) \mathrm{d}z \right] \mathrm{d}y \\
&= (s-r) \int_0^\infty \mathrm{e}^{-\Phi(s)z} \left[ \int_0^x  W_p(x-y) W_r(y+z) \mathrm{d}y \right] \mathrm{d}z .
\end{align*}
Using~\eqref{eq:identity-W} to compute the inner integral and then using~\eqref{eq:Zq-def} again, we can write
\begin{multline*}
\int_0^x W_p(x-y) Z_r (y;\Phi(s)) \mathrm{d}y = \frac{s-r}{p-r} \left[ \frac{Z_p(x;\Phi(s))}{s-p} - \frac{Z_r(x;\Phi(s))}{s-r}\right] \\
- (s-r) \int_0^\infty \mathrm{e}^{-\Phi(s)z} \left[ \int_0^z W_p(x+z-y) W_r (y) \mathrm{d}y \right] \mathrm{d}z ,
\end{multline*}
where
\begin{align*}
\int_0^\infty \mathrm{e}^{-\Phi(s)z} & \left[ \int_0^z W_p(x+z-y) W_r (y) \mathrm{d}y \right] \mathrm{d}z \\
&= \int_0^\infty W_r (y) \left[ \int_y^\infty \mathrm{e}^{-\Phi(s)z} W_p(x+z-y)  \mathrm{d}z \right] \mathrm{d}y \\
&= \frac{1}{s-p} Z_p (x; \Phi(s)) \int_0^\infty \mathrm{e}^{-\Phi(s)y} W_r (y) \mathrm{d}y \\
&= \frac{1}{(s-p)(s-r)} Z_p (x; \Phi(s)) .
\end{align*}
In conclusion,
\[
\int_0^x W_p(x-y) Z_r (y;\Phi(s)) \mathrm{d}y = \frac{Z_p(x;\Phi(s)) - Z_r(x;\Phi(s))}{p-r} .
\]
The result follows.

\section{The (candidate) value function has a density of finite variation}\label{proof-of-BV-density}

In this appendix, we show that $v_{b_\omega^\ast}$ has a density that is of finite variation. Before proceeding, recall from Proposition~\ref{prop:perf-barrier} that
\[
v_{b_\omega^\ast} (x) =
\begin{cases}
\frac{\cH^{\homega}(x)}{\cH^{\homega\prime}(b_\omega^\ast)} & \text{for $x \in (-\infty,b_\omega^\ast]$,} \\
x-b_\omega^\ast + \frac{\cH^{\homega}(b_\omega^\ast)}{\cH^{\homega\prime}(b_\omega^\ast)} & \text{for $x \in [b_\omega^\ast,\infty)$.}
\end{cases}
\]

First, recall from Lemma~\ref{lem:alternate-functional-equations} (see also Equation~\eqref{eq:Zq-def}) with $p =\phi_q$ and $\omega=\omega_q$ that we have
\[
\cH^{\homega}(x) = \mathrm e^{\Phi(\phi_q)(x-a)} + \int_{a}^x  W_{\phi_q}(x-y) (\omega(y)-\phi) \cH^{\homega}(y) \diff y, \quad x \in \reals .
\]
Then, from equation~\eqref{H:first:derivative}, we deduce that
\[
\cH^{\homega\prime}(x) = \Phi(\phi_q) \mathrm e^{\Phi(\phi_q)(x-a)} + W_{\phi_q}(0+) (\omega(x)-\phi) \cH^{\homega}(x) + \int_{a}^x  W^\prime_{\phi_q}(x-y) (\omega(y)-\phi) \cH^{\homega}(y) \diff y .
\]
For $x \in (-\infty,a)$, we observe that $\cH^{\homega\prime}(x) = \Phi(\phi_q) \mathrm e^{\Phi(\phi_q)(x-a)}$ which is increasing. Hence, $\cH^{\homega\prime}$ is of finite variation on $(-\infty,a)$. On $(0,\infty)$, thanks to Proposition~\ref{prop:log-convex-general}, we have that $\cH^{\homega\prime}$ is convex, hence it is of finite variation on any compact of $(0,\infty)$.

Next, recall that $\lbrace a_1,\cdots,a_{n+1} \rbrace$ is a partition of $[a,0]$; in particular, $a_1=a$ and $a_{n+1}=0$. Let us fix $i \in \lbrace 1,\cdots,n \rbrace$ and show that $\cH^{\homega\prime}$ is of finite variation on $(a_i,a_{i+1})$.
%

Since $\omega$ is non-increasing and $\cH^{\homega}$ increasing, then $x \mapsto (\omega(x)-\phi) \cH^{\homega}(x)$ is of finite variation on $(a_i,a_{i+1})$. Next, since $W_{\phi_q}^\prime (z) = 0$ for $z<0$, we can write
\[
\int_{a}^x  W^\prime_{\phi_q}(x-y) (\omega(y)-\phi) \cH^{\homega}(y) \diff y = \int_{a}^{\infty}  W^\prime_{\phi_q}(x-y) (\omega(y)-\phi) \cH^{\homega}(y) \diff y .
\]
Let $\lbrace x_1, \cdots, x_{N+1} \rbrace$ be a partition of $(a_i,a_{i+1})$, with $x_1=a_i$ and $x_{N+1}=a_{i+1}$. We have
\begin{multline*}
\sum_{j=1}^{N} \Big| \int_{a}^{\infty}  W^\prime_{\phi_q}(x_{j+1}-y) (\omega(y)-\phi) \cH^{\homega}(y) \diff y  - \int_{a}^{\infty}  W^\prime_{\phi_q}(x_j-y) (\omega(y)-\phi) \cH^{\homega}(y) \diff y \Big| \\
\leq \int_a^{\infty} \sum_{j=1}^{N} | W^\prime_{\phi_q}(x_{j+1}-y) - W^\prime_{\phi_q}(x_j-y) | |\omega(y)-\phi| \cH^{\homega}(y) \diff y.
\end{multline*}

Fix $y>a$. Note that if $z \in (a_i, a_{i+1})$ then $z-y>0$. Since $x \mapsto W^\prime_{\phi_q}(x-y)$ is convex on $(a_i, a_{i+1})$ (thanks to Proposition~\ref{prop:log-convex-general}), it follows that there exists $\hat{a} \in [a_i,a_{i+1}]$ such that $W^\prime_{\phi_q}(\hat{a}-y) \leq W^\prime_{\phi_q}(x-y)$ for all $x \in [a_i,a_{i+1}]$. Hence, we can write
\[
\sum_{j=1}^{N} | W^\prime_{\phi_q}(x_{j+1}-y) - W^\prime_{\phi_q}(x_j-y) | = W^\prime_{\phi_q}(a_i-y) + W^\prime_{\phi_q}(a_{i+1}-y) - 2 W^\prime_{\phi_q}(\hat{a}-y) .
\]

Consequently, we deduce that
\begin{align*}
\sum_{j=1}^{N} & \Big| \int_{a}^{\infty}  W^\prime_{\phi_q}(x_{j+1}-y) (\omega(y)-\phi) \cH^{\homega}(y) \diff y  - \int_{a}^{\infty}  W^\prime_{\phi_q}(x_j-y) (\omega(y)-\phi) \cH^{\homega}(y) \diff y \Big| \\
\leq & \int_{a}^{\infty}  W^\prime_{\phi_q}(a_i-y) |\omega(y)-\phi| \cH^{\homega}(y) \diff y + \int_{a}^{\infty}  W^\prime_{\phi_q}(a_{i+1}-y) |\omega(y)-\phi| \cH^{\homega}(y) \diff y \\
& \qquad - 2 \int_{a}^{\infty}  W^\prime_{\phi_q}(\hat{a}-y) |\omega(y)-\phi| \cH^{\homega}(y) \diff y .
\end{align*}
Note that this last quantity is finite and independent of the chosen partition.

In conclusion, we have that $\cH^{\homega\prime}$ is locally of finite variation on $\reals$.

Finally, since $v_{b_{\omega}^\ast}$ is affine on $(b_{\omega}^\ast,\infty)$, hence its derivative has finite variation on $(b_{\omega}^\ast,\infty)$. Also, as $v_{b_{\omega}^\ast}$ is proportional to $\cH^{\homega}$ on $(-\infty,b_{\omega}^\ast)$, then $v'_{b_{\omega}^\ast}$ is (locally) of finite variation on $(-\infty,b_{\omega}^\ast)$. The result follows. 
\end{document}